\newtheorem {theorem}{Theorem}
\newtheorem {claim}{Claim}
\newtheorem {corollary}{Corollary}
\newtheorem {definition}{Definition}
\newtheorem {example}{Example}
\newtheorem {proposition}{Proposition}
\newtheorem {remark}{Remark}
\newenvironment {proof}[1][Proof]{\noindent \textbf {#1.} }{\ \rule {0.5em}{0.5em}}
\begin{document}

\title{Invariant Distributions in Nonlinear Markov Chains with Aggregators: Theory, Computation, and Applications}

\author{Bar Light\protect\thanks{Business School and Institute of Operations Research and Analytics, National University of Singapore, Singapore. e-mail: \textsf{barlight@nus.edu.sg} }} 
\maketitle

\thispagestyle{empty}

 \noindent \textsc{Abstract}:
\begin{quote}
	
We study the properties of a subclass of stochastic processes called discrete-time nonlinear Markov chains with an aggregator, which naturally appear in various topics such as strategic queueing systems, inventory dynamics, opinion dynamics, and wealth dynamics. In these chains, the next period's distribution depends on both the current state and a real-valued function of the current distribution. For these chains, we provide conditions for the uniqueness of an invariant distribution that do not rely on typical contraction arguments. Instead, our approach leverages flexible monotonicity properties imposed on the nonlinear Markov kernel. We demonstrate the necessity of these monotonicity conditions for proving the uniqueness of an invariant distribution through simple examples. We also provide existence results and introduce an iterative computational method that solves a simpler, tractable subproblem in each iteration and converges to the invariant distribution of the nonlinear Markov chain, even in cases where uniqueness does not hold. We leverage our findings to analyze invariant distributions in strategic queueing systems, study inventory dynamics when retailers optimize pricing and inventory decisions, establish conditions ensuring the uniqueness of solutions for a class of nonlinear equations in 
$\mathbb{R}^{n}$, and investigate the properties of stationary wealth distributions in large dynamic economies.

\end{quote}

\newpage

\section{Introduction}

Nonlinear Markov chains are stochastic processes in which the distribution of the process in the next period  depends on both the current state of the chain and the current distribution. These processes naturally model interacting particle systems and have been extensively studied across various topics in operations, economics, and applied probability, including mean-field games \citep{huang2006large,lasry2007mean,adlakha2013mean}, queueing systems \citep{xu2013supermarket,honnappa2015strategic}, population games \citep{sandholm2010population},  dynamic auctions \citep{iyer2014mean}, nonlinear Monte Carlo algorithms \citep{del2011nonlinear}, stochastic optimization \citep{huaccelerating}, wealth distribution analysis \citep{benhabib2014wealth,ma2020income}, and evolutionary biology \citep{kolokoltsov2010nonlinear}. 

Nonlinear Markov chains with an aggregator are a subclass of nonlinear Markov chains,  where the next period's distribution of the process depends on both the current state of the chain and a real-valued function of the current distribution that is called an aggregator.\footnote{The terminology of `aggregator' originates from the game theory and economics literature, where the process's distribution often represents the distribution of players' states, and the aggregator typically corresponds to a summary statistic such as the mean or a price determined by the entire distribution \citep{acemoglu2012, acemoglu2024equilibrium, light2022mean}. While this paper studies general nonlinear Markov chains that may not necessarily arise from game theory contexts, we still adopt this terminology. Numerous dynamic economic models incorporate an aggregator function, as described in the papers cited above. Nonlinear Markov chains equipped with an aggregator, studied in this paper, capture the dynamics of these systems.} These chains naturally arise in various settings within operations and beyond. For instance, in inventory systems, the aggregator summarizes inventory levels across retailers, influencing replenishment and pricing decisions. In queueing systems, the aggregator can represent expected waiting times, affecting customer arrival behavior. In large dynamic economies, such as those modeling wealth distribution in heterogeneous-agent settings \citep{aiyagari1994} or industry dynamics \citep{weintraub2008markov}, aggregators represent key economic variables like interest rates or equilibrium prices. Additionally, nonlinear Markov chains with an aggregator appear in models of opinion dynamics and other stochastic processes described in \cite{kolokoltsov2010nonlinear}.

The invariant distribution plays an important role in these models as we discuss in our applications. For instance, in dynamic economic models, the invariant distribution  corresponds to the equilibrium of the economy  (see Section \ref{sec:wealth} for a specific example). Similarly, in queueing systems, the invariant distribution describes the stationary distribution of system states, such as queue lengths, which are typically used for  analysis and operational decision-making. Furthermore, in certain other settings, the invariant distribution corresponds to a solution to nonlinear equations. Thus, establishing conditions for uniqueness ensures that these systems yield a single equilibrium or stationary outcome, enabling robust comparative statics across these applications. 

In this paper, we study discrete-time nonlinear Markov chains with an aggregator and provide conditions that ensure the uniqueness of an invariant distribution for these chains without relying on contraction arguments. 
Our approach to prove uniqueness is based on monotonicity properties imposed on the nonlinear Markov kernel. These monotonicity conditions are flexible and can be tailored to the specific nonlinear Markov chain being studied (see the examples in Section \ref{sec:flexibility}). We provide simple examples that demonstrate that uniqueness may fail when the monotonicity conditions do not hold (see Examples \ref{example:decreasing} and \ref{example:preserving} in Section \ref{sec:necessity}).

Additionally, we establish the existence of an invariant distribution under continuity and boundedness assumptions (see Section \ref{Section: Existence}) and introduce a novel algorithm to compute an invariant distribution (see Section \ref{sec:compute}). Crucially, our algorithm does not rely on contraction conditions and finds an invariant distribution  even when uniqueness does not necessarily hold.

In Section \ref{sec:applications}, we explore four distinct applications where our results can be naturally applied. The first application addresses a strategic G/G/1 queueing system, where customer arrivals are influenced by expected waiting times. Under natural conditions on the arrival process that imply that when the expected waiting time is higher fewer agents join the queue,  we demonstrate that there is a unique invariant distribution for the nonlinear dynamics describing the queueing system. We also compute the unique equilibrium expected waiting time for a specific M/G/1 queueing system case. The second application investigates dynamic pricing and inventory replenishment in a revenue management context. Here, a population of ex-ante identical retailers dynamically optimizes pricing and inventory decisions in the face of stochastic demand. Using our framework, we provide an algorithm to find the equilibrium of the system that corresponds to the stationary distribution and establish conditions on inventory dynamics that ensure the existence of a unique invariant distribution, enabling robust comparative statics analysis. The third application studies nonlinear equations in $\mathbb{R}^{n}$, which, despite lacking contraction properties, still possess a unique solution under certain monotonicity conditions that we provide. The fourth application examines the general evolution of wealth distributions within dynamic economic models. We introduce economic assumptions on agents' decisions that ensure the uniqueness of the invariant equilibrium wealth distribution. These applications demonstrate the versatility of our analysis in establishing the uniqueness of an invariant distribution across a diverse set of settings.

\cite{butkovsky2014ergodic} provides conditions for the ergodicity of nonlinear Markov chains. \cite{saburov2016ergodicity} establishes ergodicity conditions for finite state nonlinear Markov chains and \cite{shchegolev2022new} provides improved convergence rates (see \cite{budhiraja2015local} and \cite{ying2018approximation} for further related results). 
However, these approaches fundamentally rely on establishing that the nonlinear Markov operator has contraction properties. This requirement, while leading to strong results like uniform ergodicity, is significantly more restrictive than the conditions for ergodicity in standard linear Markov chains. Crucially, these contraction properties are not satisfied by the models we study in this paper. We discuss this in detail in Section \ref{sec:Non-contraction} in the appendix and show that the nonlinear operators corresponding to our key applications are generally not contractions, even in simple parameter settings. Therefore, these results from the literature are not applicable in our setting.
Additionally, in Example \ref{Example:convergence} in Section \ref{sec:convergence}, we demonstrate that even for one of the most basic nonlinear Markov chains with two states, which satisfies our uniqueness conditions, the chain is not ergodic and 
 does not converge to the unique invariant distribution. This example illustrates that the concepts of uniqueness and ergodicity are distinct, with the separation, intuitively, being more pronounced in nonlinear Markov chains.  In Example \ref{Example: Average Convergence} in Section \ref{sec:convergence} we further show that a law of large numbers does not hold for the nonlinear Markov chains even when our uniqueness conditions hold.  Despite these negative results, we   provide some important applications where the uniqueness of the invariant measure is of interest. For example, the invariant measure can correspond to the solution of nonlinear equations in $\mathbb{R}^{n}$ or the equilibrium wealth distribution of large dynamic economies (see Section \ref{sec:applications}).\footnote{Another  related area of literature is mean field games, where conditions for uniqueness have been studied in \cite{lasry2007mean}, \cite{light2022mean}, and \cite{anahtarci2023learning} in different settings (see also \cite{wikecek2020discrete} and references therein for insights into the connection between discrete-time mean field games and nonlinear Markov chains).} 
  In a continuous time setting with a finite state space, \cite{neumann2023nonlinear} provides conditions that imply the uniqueness of an invariant measure, based on specific assumptions about differentiability and non-singularity related to the generator of the Markov chain. Furthermore,  \cite{neumann2023nonlinear} illustrates peculiar behaviors exhibited by nonlinear Markov chains in continuous time through several examples. Unlike prior works that depend heavily on differentiability or contraction conditions, our results focus on nonlinear Markov chains with an aggregator structure and leverage monotonicity conditions instead. This approach enables us to apply our uniqueness result in settings that previous methods cannot address, such as the applications in Section \ref{sec:applications} we described above. Hence, beyond its theoretical contributions, our results can be used to study invariant distributions and equilibria in practical settings.

\section{Model and Definitions}

This section introduces the model and preliminaries.

\subsection{Nonlinear Markov Chains with an Aggregator} \label{sec:setting}

Let $S$ be a Polish space and $\mathcal{B}(S)$ be the Borel $\sigma$-algebra on $S$. We denote by $\mathcal{P}(S)$ the space of all probability measures on the measurable space $(S,\mathcal{B}(S))$. We study 
 the properties of the nonlinear Markov chain $(X_{t})_{t \in \mathbb{N}}$ on $S$  given by
\begin{equation} \label{Eq:w}
X_{t+1} = w(X_{t},H(\mu _{t}), \epsilon _{t+1})
\end{equation}
where $w: S \times \mathcal{H} \times E \rightarrow S$ is a measurable function, $\mu _{t}$ is the law of $X_{t}$, $H: \mathcal{P}(S) \rightarrow \mathbb{R}$ is a measurable function that is called an aggregator, $\mathcal{H} = \{ H(\mu): \mu \in \mathcal{P}(S) \} $ is the image of $H$, and $(\epsilon_{t})_{t \in \mathbb{N}}$ are independent and identically distributed (i.i.d.) random variables that take values in a Polish space $E$ with a law $\phi$.   We denote and refer to \(h = H(\mu)\) as the aggregator value.

Let $Q$ be the nonlinear Markov kernel that describes the transitions of the nonlinear Markov chain  $(X_{t})_{t \in \mathbb{N}}$, i.e., 
\begin{equation}
    Q(x,h,B) = \phi (\epsilon \in E : w(x,h,\epsilon) \in B)
\end{equation}
for all $B \in \mathcal{B}(S)$, $x \in S$, $h \in \mathcal{H}$. That is, $Q(x,h,B)$ is the probability that the next period's state would lie in the set $B$ when the current state is $x$ and the current aggregator value is $h$. We define the operator \( T: \mathcal{P}(S) \rightarrow \mathcal{P}(S) \) by
$$
T\mu(B) = \int_{S} Q(x, H(\mu), B) \, \mu(dx)
$$
for every measurable set \( B \in \mathcal{B}(S) \). A probability measure \( \mu \in \mathcal{P}(S) \) is an invariant distribution of \( Q \) if it satisfies \( T\mu = \mu \), meaning that \( \mu \) is a fixed point of the operator \( T \).

We are interested in finding conditions that imply that $T$ has a unique fixed point. The operator $T$ is nonlinear and generally not a contraction so standard methods cannot be applied. Instead, we prove uniqueness by leveraging monotonicity conditions over the nonlinear Markov kernel $Q$ that we describe in Section \ref{sec:definition}.

\subsection{Preliminaries} \label{sec:definition} 

We assume throughout the paper that $S$ is endowed with a closed partial order $ \geq $.\footnote{The partial order $ \geq $ on $S$ is closed if $x_{n} \geq y_{n}$ for all $n$, $y_{n},x_{n} \in S$, $y_{n} \rightarrow y$ and $x_{n} \rightarrow x$, $y,x \in S$, imply $x \geq y$. For example, the standard product order on $S \subseteq \mathbb{R}^{n}$ is closed. } 
 We say that a function $f :S \rightarrow \mathbb{R}$ is increasing if $f (y) \geq f (x)$ whenever $y \geq x$. When $S \subseteq \mathbb{R}^{n}$ we will assume that $S$ is endowed with the standard product order unless otherwise stated (that is, $x \geq y$ for $x,y \in \mathbb{R}^{n}$ if $x_{i} \geq y_{i}$ for each $i=1,\ldots,n$). 

The space of probability measures $\mathcal{P} (S)$ is endowed with the weak topology. A sequence of measures $\mu_{n} \in \mathcal{P} (S)$ converges weakly to $\mu \in \mathcal{P} (S)$ if for all bounded and continuous functions $f :S \rightarrow \mathbb{R}$ we have
\begin{equation*}\underset{n \rightarrow \infty }{\lim }\int _{S}f (s) \mu_{n} (d s) =\int _{S}f (s) \mu (d s).
\end{equation*}

To prove that $T$ has a unique fixed point it is  convenient  to assume that the linear Markov kernel  $Q(x,h,\cdot)$ has a unique invariant distribution when the aggregator value $h \in \mathcal{H}$ is fixed. That is, the operator $M_{h} :\mathcal{P} (S) \rightarrow \mathcal{P} (S)$ has a unique fixed point where $M_{h}$ is the operator given by 
\begin{equation*}M_{h} \theta  (B) =\int _{S}Q (x ,h ,B) \theta  (d x)
\end{equation*}
that is parameterized by a fixed aggregator value $h \in \mathcal{H}$.

\begin{definition} \label{def:propU} (Property (U)). 
We say that $Q$ satisfies Property (U) if for any $h \in \mathcal{H}$, the operator $M_{h}$ has a unique fixed point $\mu _{h}$. 
\end{definition}

A stronger version of Property (U), which we refer to as Property (C), states that the Markov kernel $M_{h}^{n} \theta $ converges weakly to $\mu _{h}$ for any probability measure $\theta  \in \mathcal{P} (S)$ where $M_{h}^{n}$ means applying the operator $M_{h}$, $n$ times.

\begin{definition} \label{def:Xerg} (Property (C)). 
We say that $Q$ satisfies Property (C) 
if $Q$ satisfies Property (U) and  $M_{h}^{n} \theta $ converges weakly to $\mu _{h}$ for any probability measure $\theta  \in \mathcal{P} (S)$ and any $h \in \mathcal{H}$ where $\mu _{h}$ is the unique fixed point of $M_{h}$. 
\end{definition}
Under certain conditions, Property (C) can be established using standard results regarding the stability of Markov chains in general state spaces (e.g., Theorem 13.3.1 or Theorem 16.2.3 in  \cite{meyn2012markov}). When the state space $S$ is finite, Property (C) can be established by assuming that $M_{h}$ is irreducible and aperiodic and Property (U) can be established by assuming that $M_{h}$ is irreducible.  

 Let  $D \subseteq \mathbb{R}^{S}$ be a convex set where $\mathbb{R}^{S}$ is the set of all functions from $S$ to $\mathbb{R}$. When $\mu _{1}$ and $\mu _{2}$ are probability measures on $(S ,\mathcal{B}(S))$, we write $\mu _{2} \succeq _{D}\mu _{1}$ if \begin{equation*}\int _{S}f(s)\mu _{2}(ds) \geq \int _{S}f(s)\mu _{1}(ds)
\end{equation*}for all Borel measurable functions $f \in D$ such that the integrals exist. With slight abuse of notation, for two random variables $X,Y$, we write $X \succeq_{D} Y$ if $\mu _{X} \succeq _{D} \mu_{Y} $ where $ \mu_{X}$ is the law of $X$ and $\mu_{Y}$ is the law of $Y$.  
The binary relation $\succeq _{D}$ is called a stochastic order. 

The key assumption that implies that the operator $T$ has at most one fixed point relates to the following monotonicity and preservation properties. 
\begin{definition}
Let $D \subseteq \mathbb{R}^{S}$. 

We say that $Q$ is $D$-decreasing  if for each $x \in S$, we have $Q (x ,h_{1} , \cdot ) \succeq  _{D}Q (x , h_{2}, \cdot )$ whenever $h_{2} \geq h_{1}$, $h_{1},h_{2} \in \mathcal{H}$.

We say that  $Q$ is $D$-increasing in $x$  with respect to $\succeq _{D}$ if for each  $h \in \mathcal{H}$, we have $Q (x _{2},h , \cdot ) \succeq  _{D}Q (x_{1} ,h, \cdot )$ whenever $x_{2} \geq x_{1}$.

We say that $Q$ is $D$-preserving if for all  $h \in \mathcal{H}$ the function 
\begin{equation*}
    v(x):=\int f (y) Q(x ,h ,dy) 
\end{equation*}
is in $D$ whenever $f \in D$.

\end{definition}

Note that when \(D\) is the set of all increasing functions, the order \(\succeq_D\) coincides with standard stochastic dominance and we write $\mu _{2} \succeq _{SD}\mu _{1}$ and say that $\mu _{2}$ first order stochastically dominates $\mu _{1}$. In this case, \(Q\) is \(D\)-increasing in \(x\) with respect to \(\succeq_D\) if and only if it is \(D\)-preserving (see, for example, Corollary~3.9.1 in \cite{topkis2011supermodularity}). 
Intuitively, when \(D\) consists of increasing functions, the \(D\)-preserving property means that, for any fixed aggregator value \(h\), the Markov kernel \(Q(\cdot,h,\cdot)\) is monotone in the sense of stochastic dominance. This property is well studied in the literature on monotone Markov chains and is often straightforward to verify in applications such as those in Section~\ref{sec:applications}. In this setting, the \(D\)-decreasing condition implies that larger aggregator values shift the distribution downward in the stochastic order sense, i.e., higher aggregator values reduce the likelihood of larger future states.
The same intuition extends to other structural classes beyond monotonicity. For instance, in models with complementarities (such as Example~\ref{example:congestion} in Section \ref{sec:flexibility}), one may take \(D\) to be the set of supermodular functions. In this case, \(D\)-preserving means that the Markov kernel with fixed aggregator value \(h\) preserves the complementarity structure over time.

We also note that when \(Q\) is \(D\)-increasing in \(x\) and $D$ is the set of all increasing functions, Property~(C) can often be verified using results from the theory of monotone Markov chains. These results typically rely on a splitting condition (see \cite{bhattacharya1988asymptotics}, \cite{kamihigashi2014stochastic}, and \cite{light2024note}) and apply to a wide range of models in operations. 

We say that $H$ is increasing with respect to $\succeq _{D}$ if $H(\mu_{2}) \geq H(\mu_{1})$ whenever $ \mu_{2} \succeq _{D} \mu_{1}$.

A stochastic order $\succeq_{D}$ is said to be closed with respect to weak convergence if $\mu^{1}_{n} \succeq_{D} \mu^{2}_{n} $ for all $n$, $\mu^{1}_{n}$ converges weakly to $\mu^{1}$, and $\mu^{2}_{n}$ converges weakly to $\mu^{2}$ imply $\mu^{1} \succeq_{D} \mu^{2}$. Many stochastic orders of interest are closed with respect to weak convergence, e.g., the standard stochastic dominance order $\succeq_{SD}$. For a textbook treatment  of the closure properties of stochastic orders see, for example, Theorems 4.B.10  and 3.A.5  in \cite{shaked2007stochastic} .   

We say that $H$ is continuous if  $ \lim _{n \rightarrow \infty} H(\mu_{n}) = H(\mu)$ whenever $\mu_{n}$ converges weakly to $\mu$. We say that $Q$ is continuous if $Q(x_{n},h_{n}, \cdot)$ converges weakly to  $Q(x,h, \cdot )$ whenever $(x_{n},h_{n}) \rightarrow (x,h)$. Also, for a parametrized random variable \( Y(z) \) depending on a parameter $z \in \mathbb{R}^{n}$, we say that \( Y(z) \) is continuous in \( z \) if \( z_n \to z \) implies that the law of \( Y(z_n) \) converges weakly to the law of \( Y(z) \).

Recall that a partially ordered set $(Z , \geq )$ is said to be a lattice if for all $x ,y \in Z$, $\sup \{x ,y\}$ and $\inf \{y ,x\}$ exist in $Z$. $(Z , \geq )$ is a complete lattice if for all non-empty subsets $Z^{ \prime } \subseteq Z$ the elements $\sup Z^{ \prime }$ and $\inf Z^{ \prime }$ exist in $Z$.

\section{Main Results}

In this section we present our main results.  In Section \ref{sec:unique} we present the monotonicity conditions that imply that  the nonlinear Markov chain has at most one invariant distribution. In Section \ref{Section: Existence} we provide two distinct existence results. In Section \ref{sec:flexibility} we provide examples that demonstrate the flexibility of the monotonicity conditions. In Section \ref{sec:necessity} we show that these monotonicity conditions are necessary to prove uniqueness in our setting and in Section \ref{sec:convergence} we show that the nonlinear Markov chain does not necessarily converge to the unique invariant distribution even for a very simple two-state case. In Section \ref{sec:compute} we provide a simple method to compute the invariant distribution. In Section \ref{sec:localresults} we provide local uniqueness results.

\subsection{Uniqueness Theorem} \label{sec:unique}
In this section we provide the monotonicity conditions that ensure \( Q \) has at most one invariant distribution. 
The proofs of all the paper's results are deferred to the Appendix.

\begin{theorem} \label{Theorem: unique}
Let $D \subseteq \mathbb{R}^{S}$ be a non-empty set such that $H$ is increasing with respect to $\succeq  _{D}$. Assume that $Q$ is $D$-preserving and $D$-decreasing.

Assume that either of the following conditions hold: 

(i) $Q$ satisfies Property (C) and $\succeq_{D}$ is closed with respect to weak convergence. 

(ii) $Q$ satisfies Property (U) and $(\mathcal{P}(S),\succeq_{D})$ is a complete lattice.

Then $Q$ has at most one invariant distribution. 
\end{theorem}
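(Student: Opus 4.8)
The plan is to assume that two invariant distributions $\mu$ and $\nu$ of $Q$ exist and, after relabeling so that $H(\mu)\le H(\nu)$ (possible since $H$ is real-valued), to show that $\mu\succeq_D\nu$. Feeding this back through the monotonicity of $H$ will force $H(\mu)=H(\nu)$, and once the aggregator is pinned down both $\mu$ and $\nu$ become fixed points of the \emph{linear} kernel $M_{H(\mu)}$, so Property (U) finishes the argument.

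First I would record two monotonicity facts about the linear operator $M_{z}$ with $z=H(\mu)$ held fixed. From $D$-preservation: if $\theta_1\succeq_D\theta_2$ and $f\in D$, then $x\mapsto\int f(y)Q(x,H(\mu),dy)$ lies in $D$, so integrating that function against $\theta_1$ versus $\theta_2$ (and using Fubini to rewrite $\int f\,d(M_{H(\mu)}\theta_i)$) gives $M_{H(\mu)}\theta_1\succeq_D M_{H(\mu)}\theta_2$; that is, $M_{H(\mu)}$ is $\succeq_D$-monotone. From $D$-decreasingness together with $H(\mu)\le H(\nu)$: $Q(x,H(\mu),\cdot)\succeq_D Q(x,H(\nu),\cdot)$ for every $x$; since $\nu$ is invariant we have $\nu=M_{H(\nu)}\nu$, so for $f\in D$, integrating $\int f(y)Q(x,H(\mu),dy)\ge\int f(y)Q(x,H(\nu),dy)$ against $\nu(dx)$ yields the one-step improvement $M_{H(\mu)}\nu\succeq_D\nu$. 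Combining these by induction, the orbit $(M_{H(\mu)}^{n}\nu)_{n\ge 0}$ is $\succeq_D$-increasing and every term dominates $\nu$.

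Now I would split into the two cases. Under (i), Property (C) gives that $M_{H(\mu)}^{n}\nu$ converges weakly to the unique fixed point $\mu_{H(\mu)}$ of $M_{H(\mu)}$; applying closure of $\succeq_D$ under weak convergence to the pair of sequences $\bigl(M_{H(\mu)}^{n}\nu\bigr)_n$ and the constant sequence $\nu$ gives $\mu_{H(\mu)}\succeq_D\nu$. Under (ii), I would instead look at the up-set $P:=\{\theta\in\mathcal P(S):\theta\succeq_D\nu\}$: it is again a complete lattice, since suprema are inherited from $(\mathcal P(S),\succeq_D)$ and infima too because $\nu$ is a lower bound of every nonempty subset of $P$; moreover $M_{H(\mu)}$ maps $P$ into $P$ (by monotonicity and $M_{H(\mu)}\nu\succeq_D\nu$) and is monotone there, so Tarski's fixed-point theorem produces a fixed point of $M_{H(\mu)}$ inside $P$, which by Property (U) must equal $\mu_{H(\mu)}$; hence again $\mu_{H(\mu)}\succeq_D\nu$. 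In either case $\mu=\mu_{H(\mu)}$, because $\mu$ being invariant makes it a fixed point of $M_{H(\mu)}$ and Property (U) identifies it; therefore $\mu\succeq_D\nu$.

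Finally, since $H$ is increasing with respect to $\succeq_D$, $\mu\succeq_D\nu$ gives $H(\mu)\ge H(\nu)$, which together with $H(\mu)\le H(\nu)$ forces $H(\mu)=H(\nu)=:z$; then $\mu$ and $\nu$ are both fixed points of $M_{z}$, so Property (U) yields $\mu=\nu$. The step I expect to be the main obstacle is case (ii): one must resist trying to prove that the orbit itself converges (nothing in the hypotheses guarantees that) and instead use the lattice structure only to \emph{locate} a fixed point of $M_{H(\mu)}$ above $\nu$, letting Property (U) do the identification; verifying that $P$ is genuinely a complete lattice, rather than merely closed under the suprema one happens to use, is the routine-but-essential detail there.
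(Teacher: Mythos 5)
Your proof is correct. The final pinching step and the use of Property~(U) once $H(\mu)=H(\nu)$ match the paper exactly, and the core step in both proofs is establishing $\mu\succeq_D\nu$ given $H(\mu)\le H(\nu)$. Where you diverge is in how that comparison is obtained. In case~(i) the paper fixes a common starting measure $\theta$ and compares the two orbits $M^n_{H(\mu)}\theta$ and $M^n_{H(\nu)}\theta$, letting Property~(C) push each to its own fixed point; you instead iterate only $M_{H(\mu)}$ from $\nu$, use the one-step improvement $M_{H(\mu)}\nu\succeq_D\nu$ (which follows from $D$-decreasingness and invariance of $\nu$), and compare the orbit to the constant sequence $\nu$. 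These are essentially the same argument viewed from different angles, though yours is a bit more economical since it needs Property~(C) only for the single kernel $M_{H(\mu)}$. The real departure is in case~(ii): the paper applies a parameterized fixed-point theorem (Corollary~2.5.2 in Topkis, giving monotonicity of extreme fixed points in the parameter) directly on $(\mathcal{P}(S),\succeq_D)$, whereas you restrict attention to the up-set $P=\{\theta:\theta\succeq_D\nu\}$, check that $P$ is itself a nonempty complete lattice invariant under $M_{H(\mu)}$, and then invoke plain Knaster--Tarski on $P$ to locate a fixed point there, which Property~(U) identifies as $\mu$. Your route is more self-contained in that it avoids the comparative-statics form of the fixed-point theorem; the paper's route avoids constructing the sublattice and leans on Topkis's result to package the monotonicity-in-parameter statement. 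Your verification that $P$ is a complete lattice (suprema of nonempty subsets of $P$ dominate any element of the subset, hence dominate $\nu$; infima dominate $\nu$ because $\nu$ is a lower bound of the subset) is exactly the right care to take and closes the one gap I would have flagged.
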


The conditions in Theorem \ref{Theorem: unique}, which establish that \( Q \) has at most one invariant distribution, do not rely on compactness or continuity assumptions, and hence, the existence of an invariant distribution is not guaranteed.
In Section \ref{Section: Existence}, we present conditions that ensure the existence of an invariant distribution.

We now provide a few comments on Theorem \ref{Theorem: unique}.

\textbf{Applications:} In many applications, verifying whether the nonlinear Markov kernel \( Q \) is both \( D \)-preserving and \( D \)-decreasing is straightforward. In Section \ref{sec:applications}, we present several applications of Theorem \ref{Theorem: unique}, including queueing systems and the dynamic evolution of wealth distributions. In these cases, the monotonicity properties of \( Q \) naturally arise from the underlying behavioral or economic assumptions governing the dynamics of the stochastic systems.

\textbf{Non-contraction:} We show in detail in Section~\ref{sec:Non-contraction} that standard contraction-based techniques for proving uniqueness of invariant distributions generally fail for the nonlinear Markov chains studied in this paper, even in simple special cases arising in our applications. In particular, we show that the associated operator \( T \) is not a contraction under either the Wasserstein or total variation metrics. Hence, standard contraction methods for proving uniqueness are not applicable in our setting. We show in Section \ref{sec:Non-contraction} that these failures are representative across all our examples and applications, rather than peculiar to specific constructions.\footnote{As an alternative, we also discuss in Section \ref{sec:Non-contraction} an augmented formulation in which the nonlinear Markov chain is embedded into a higher-dimensional linear Markov chain. However, we show that even this chain does not satisfy the conditions required for standard uniqueness results.}  This further motivates our monotonicity-based approach.

\textbf{Local results:} The proof of Theorem \ref{Theorem: unique} indicates that it suffices to assume Property (U) only for \( h \in \mathcal{H} \), where \( h = H(\mu) \) and \( \mu \) is an invariant distribution of \( Q \). This relaxation means that Property (U) does not need to hold for all \( h \in \mathcal{H} \), which can simplify the verification of the condition in specific applications.

The monotonicity conditions required for proving Theorem \ref{Theorem: unique} are global, meaning they must hold across all probability measures on $S$. However, in certain applications, only a subset of these probability measures includes relevant candidates for invariant distributions or is of particular interest. In Proposition \ref{Prop:local}, introduced in Section \ref{sec:localresults}, we provide a local version of Theorem \ref{Theorem: unique} that allows for establishing uniqueness within a restricted set of probability measures.

\textbf{The finite case and complete lattices:}  Condition (ii) of Theorem \ref{Theorem: unique} is particularly useful for the case that $S$ is a finite set or a compact set in $\mathbb{R}$.
For example, suppose that $S=\{s_{1},\ldots,s_{n}\}$ is an ordered set of numbers with $s_{1} \leq s_{2} \leq ... \leq s_{n}$ and $\mathcal{P}(S)$ is endowed with the standard stochastic dominance order $\succeq  _{SD}$. It is immediate to see that $(\mathcal{P}(S),\succeq_{SD})$ is a complete lattice with
$$ \sup \{ \mu , \lambda \}  (\{s_{t}, \ldots, s_{n} \})=  \max \{ \mu   (\{s_{t}, \ldots, s_{n} \}) ,\lambda   (\{s_{t}, \ldots, s_{n} \}) \} $$
and 
$$ \inf \{ \mu , \lambda \} (\{s_{t}, \ldots, s_{n} \})=  \min \{ \mu (\{s_{t}, \ldots, s_{n} \}) ,\lambda  (\{s_{t}, \ldots, s_{n} \}) \} $$
for all $t =1,\ldots, n$  
(recall that $\mu  \succeq  _{SD} \lambda$ if and only if for every upper set $B$ we have $\mu (B) \geq \lambda (B)$ where $B \in \mathcal{B} (S)$ is an upper set if $x_{1} \in B$ and $x_{2} \geq x_{1}$ imply $x_{2} \in B$). In a similar fashion, $(\mathcal{P}(S),\succeq_{SD})$ is a complete lattice  when $S$ is a compact set in $\mathbb{R}$ when $\mathbb{R}$ is endowed with the standard partial order. For this result and other examples of stochastic orders that generate lattices of probability measures see  \cite{muller2006stochastic}.

\subsection{Existence of Invariant Distribution} \label{Section: Existence}
In this section, we study the existence of an invariant distribution. We present two distinct results. 
The first existence result, Proposition \ref{Prop:existence}, holds  for the case where $S$ is compact and $Q$ and $H$ are continuous and follows from standard fixed-point arguments. Extending this existence result to non-compact state spaces remains an interesting avenue for future research.

\begin{proposition} \label{Prop:existence}
Suppose that $H$ and $Q$ are continuous and that $S$ is compact. Then $Q$ has an invariant distribution. 
\end{proposition}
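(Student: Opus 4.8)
The plan is to realize an invariant distribution as a fixed point of the operator $T$ and to produce that fixed point via the Schauder--Tychonoff fixed point theorem. The structural input is that, since $S$ is compact polish, $C(S)$ (the continuous, hence bounded, real functions on $S$) is a separable Banach space, and by the Riesz representation theorem the space $\mathcal{M}(S)$ of finite signed Borel measures on $S$ is its dual; thus $\mathcal{M}(S)$ equipped with the weak-$*$ topology is a locally convex space, and $\mathcal{P}(S)$ is a subset of it on which the weak-$*$ topology coincides with the weak topology used throughout the paper. By the Banach--Alaoglu theorem $\mathcal{P}(S)$ is then weak-$*$ compact, it is obviously convex and nonempty, and it is metrizable because $C(S)$ is separable. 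So the whole proof reduces to checking that $T$ maps $\mathcal{P}(S)$ continuously into itself.

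That $T\mu$ is again a probability measure is immediate from the definition of $T$, so the work is the continuity of $T$. Because $\mathcal{P}(S)$ is metrizable I would argue along sequences: take $\mu_{n} \to \mu$ weakly and a bounded continuous $f : S \to \mathbb{R}$, and put $g_{n}(x) := \int_{S} f(y)\,Q(x,H(\mu_{n}),dy)$ and $g(x) := \int_{S} f(y)\,Q(x,H(\mu),dy)$, so that $\int f\, d(T\mu_{n}) = \int_{S} g_{n}\, d\mu_{n}$ and $\int f\, d(T\mu) = \int_{S} g\, d\mu$, with all these functions bounded by $\sup_{S}|f|$. Continuity of $H$ gives $H(\mu_{n}) \to H(\mu)$; continuity of $Q$ then gives, holding the second argument fixed, that $g$ is continuous, and, letting the second argument vary along $H(\mu_n)$, that $g_{n}(x_{n}) \to g(x)$ whenever $x_{n} \to x$. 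The key point is that a uniformly bounded sequence that converges continuously on a compact metric space converges uniformly, so $\sup_{S}|g_{n} - g| \to 0$. Given this, I would split $\left|\int g_{n}\,d\mu_{n} - \int g\,d\mu\right| \le \sup_{S}|g_{n} - g| + \left|\int g\,d\mu_{n} - \int g\,d\mu\right|$ and let $n \to \infty$: the first term vanishes by uniform convergence, the second because $g$ is bounded continuous and $\mu_{n} \to \mu$ weakly. Hence $T$ is weakly continuous, and Schauder--Tychonoff yields a fixed point $\mu^{*} \in \mathcal{P}(S)$ of $T$, which is by definition an invariant distribution of $Q$.

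The only genuinely delicate step is the continuity of $T$, and inside it the simultaneous variation of the integrand $g_{n}$ and the integrating measure $\mu_{n}$: the device that makes those two limits commute is the upgrade of continuous convergence to uniform convergence on the compact metric space $S$. Everything else — compactness and convexity of $\mathcal{P}(S)$, metrizability, and the resulting reduction to sequences — is routine. This is also exactly where compactness of $S$ enters, which clarifies the remark following the statement: for non-compact $S$ one would need to substitute a tightness or Lyapunov-function condition guaranteeing that the orbit of $T$ remains in a fixed weakly compact set before a fixed point argument can be applied.
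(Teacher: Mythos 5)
Your proposal is correct and follows essentially the same route as the paper: verify that $\mathcal{P}(S)$ is a nonempty, convex, weakly compact subset of a locally convex space, show $T$ is weakly continuous by passing the limit through $\int g_n\,d\mu_n$ where $g_n(x)=\int f(y)Q(x,H(\mu_n),dy)$ converges continuously to $g$, and invoke Schauder--Tychonoff. The only difference is cosmetic: where the paper cites a Lebesgue convergence theorem for varying measures (Serfozo) to get $\int g_n\,d\mu_n\to\int g\,d\mu$, you upgrade continuous convergence to uniform convergence on the compact metric space $S$ and split the integral, which is an elementary and equally valid substitute in this compact setting.
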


The second existence result relies on continuity of $H$ and $Q$, a boundedness condition for the aggregator and a tightness condition instead of compactness of the state space. This result is especially useful in applications where the state space is not finite or compact, such as the queueing systems studied in Section \ref{sec:applications} or the autoregressive processes discussed in Example \ref{example:flexible}.

Recall that a sequence of probability measures $\{\mu_{k}\}$ on $S$ is called tight if for all $\epsilon > 0$ there is a compact subset $K_{\epsilon}$ of $S$ such that $\mu_{k}(S \setminus K_{\epsilon}) \leq \epsilon$ for all $k$. Tightness is a standard assumption in order to ensure the existence of an invariant distribution in the usual linear Markov chain theory (see \cite{meyn2012markov} for an extensive study of invariant distributions).

\begin{proposition} \label{Prop:existence2}
Suppose that $H$ and $Q$ are continuous and that Property (U) holds. In addition, assume that there exist $h',h'' \in \mathbb{R}$, $h''>h'$, such that $h'' \geq H(\mu_{h''})$ and $h' \leq H(\mu_{h'})$ where $\mu_{h}$ is the unique fixed point of $M_{h}$ (see Definition \ref{def:propU}) and $\mu_{h} \in \mathcal{P}(S)$ for all $h \in [h',h'']$. 
Assume that for any sequence $\{h_{n}\} $, $h_{n} \in [h',h'']$ that converges to some $h$, the sequence $\{\mu_{h_{n}} \}$ is a tight sequence of probability measures. 

Then $Q$ has an invariant distribution.\footnote{\label{footnote:Hcontinuous}From the proof of Proposition \ref{Prop:existence2}, it is immediate that it is enough to require that $H$ is continuous on $\{\mu_{h} \in \mathcal{P}(S) : h \in [h', h'']\}$.
} 
\end{proposition}

The existence result in Proposition \ref{Prop:existence2}
 not only establishes the existence of an invariant distribution but also provides the basis for an algorithm for finding this distribution. Specifically, we provide a bisection method to find the invariant distribution (see Section \ref{sec:compute}) which complements the theoretical existence results.



\subsection{Flexibility of the Monotonicity Conditions} \label{sec:flexibility}

In applications, it is common to select \( D \) as the set of all increasing functions, corresponding to the standard stochastic dominance order. 
However, Theorem~\ref{Theorem: unique} allows for greater flexibility in the choice of \( D \), and this flexibility can be essential in certain settings. 
For instance, when complementarity across states plays a key role as in Example~\ref{example:congestion}, choosing \( D \) as the set of supermodular functions enables proving uniqueness.  
While the examples below are somewhat specialized, they show the importance of tailoring the choice of \( D \) to the structure of the model.

\begin{example} \label{example:flexible} (Flexibility of the set $D$).
 Consider the following nonlinear Markov chain 
\begin{equation} \label{Eq:ex_1(i)} X_{t+1} = a X_{t} - H(\mu_{t}) + \epsilon_{t+1} \end{equation}
on $\mathbb{R}$ where $0<a<1$, $\epsilon _{t}$ are i.i.d. random variables with finite expectations and variances, and  the aggregator is given by $H(\mu) = \int m(x)  \mu(dx)$ for some increasing, continuous function $m:\mathbb{R} \rightarrow \mathbb{R}$ such that $|m(x)| \leq C_{0} + C_{1}|x|^{2}$ for all $x$ and $C_{0}, C_{1} \geq 0$.\footnote{This quadratic growth condition can be replaced by a higher-order polynomial growth of the form $|x|^{k}$, as long as the $k$th moment of $\epsilon_t$ is finite.} Then, we can use Theorem \ref{Theorem: unique}  to show that the nonlinear Markov chain $(X_{t})_{t \in \mathbb{N}}$ has at most one invariant distribution and Proposition \ref{Prop:existence2} to show that an invariant distribution exists.  The proofs of the claims are provided in the appendix. 
\begin{claim} \label{claim1} {The Markov chain given in Equation (\ref{Eq:ex_1(i)}) has a unique invariant distribution.}
\end{claim}

Now consider the nonlinear Markov chain \begin{equation}\label{eq:ex_1_2dim} (X_{1,t+1},X_{2,t+1}) =  (a X_{1,t} - H(\mu_{t}) + \epsilon_{1,t+1}, k(X_{2,t}) + \epsilon_{2,t+1})\end{equation} 
on $\mathbb{R}^{2}$ where $0<a<1$, $\epsilon _{1,t} , \epsilon_{2,t}$ are i.i.d. random variables with finite expectations and variances, $k$ is a function that is continuous and bounded but not increasing, and the aggregator is given by $H(\mu) = \int m(x_{1})  \mu(d(x_{1},x_{2}))$ for some increasing continuous and bounded function $m:\mathbb{R} \rightarrow \mathbb{R}$. In this case,   $Q$ is not necessarily $D$-preserving when $D$ is the set of all increasing functions because $k$ is not increasing. However, if we let $D$ to be the set of all the functions that are increasing in the first argument, it can be verified that $Q$ is both $D$-preserving and $D$-decreasing (see the claim below).  
\begin{claim} \label{claim2} {Consider the Markov chain given in Equation (\ref{eq:ex_1_2dim}). Then it has a unique invariant distribution if Property (C) holds.\footnote{Establishing Property (C) for such Markov chains has been extensively studied in the literature \citep{meyn2012markov} so we omit the details for brevity. }} 
\end{claim}
\end{example}

\begin{example} \label{Example:flexible2}
(Flexibility of the set $D$). Consider the $n$-dimensional nonlinear Markov chain on $\mathbb{R}^{n}$ 
with 
\begin{equation} \label{Eq_Ex1_ii} X_{i,t+1} =  a_{i} X_{i,t} - \beta_{i} H(\mu_{t}) + \epsilon_{i,t+1} \end{equation}
for $i=1,\ldots,n$ where $0<a_{i}<1$, $\epsilon _{i,t} $ are i.i.d. random variables with finite expectations and variances,  and the aggregator is given by 
$H(\mu)  = \int \sum _{i=1}^{n} 
 \gamma_{i} x_{i}\mu(d(x_{1},x_{2},\ldots,x_{n}))$ for some vector $\gamma = (\gamma_{1},\ldots,\gamma_{n})$ in $\mathbb{R}^{n}$. 

 Let $O$ be the set of vectors in $\mathbb{R}^{n}$ such that $x_{i}$ is non-negative for an odd $i$ and non-positive for an even $i$, that is, $O = \{ x \in \mathbb{R}^{n}: x_{i} \geq 0, i \text{ is odd }, x_{i} \leq 0,  i \text{ is even}\} $. Assume that $\beta = (\beta _{1} ,\ldots,\beta_{n})$ and $\gamma = (\gamma_{1} , \ldots ,\gamma_{n})$ are in $O$. It is easy to see that we cannot use $D$ as the set of all increasing functions in order to apply Theorem \ref{Theorem: unique}. However, 
 consider the set of functions $D$ such that $f: \mathbb{R}^{n} \rightarrow \mathbb{R}$ is in $D$ if $f(x) = \sum _{i=1}^{n} y_{i}x_{i} + c$ for some $y \in O$ and $c \in \mathbb{R}$. Under this set of functions $D$, we show that we can use Theorem \ref{Theorem: unique} to prove that the nonlinear Markov chain has at most one invariant distribution. 
 \begin{claim} \label{claim3} The nonlinear Markov chain given in Equation (\ref{Eq_Ex1_ii}) has a unique invariant distribution.
 \end{claim}
\end{example}

We note that contraction properties are generally not satisfied for the nonlinear Markov chains discussed in the preceding examples (see Section~\ref{sec:Non-contraction} in the Appendix for a detailed discussion on this and counterexamples). 

Other important examples of the flexibility of $D$ arise when using the supermodular stochastic order and the convex stochastic order. Supermodularity is a natural structural property in many models in operations and economics (see, e.g., \cite{topkis2011supermodularity}), while the convex order plays a key role in risk analysis through the concept of mean-preserving spreads.
We now present an example where $D$ is taken to be the set of supermodular functions, which enables us to establish uniqueness, whereas choosing $D$ as the set of increasing functions would fail. In Section \ref{Sec:convexOrder} in the appendix, we provide a complementary example where $D$ is the set of convex functions again leading to uniqueness.

\begin{example}\label{example:congestion}
 (Supermodular Stochastic Order). Consider a continuum of users interacting with a shared service (e.g.,  cloud service).  Each user’s state at time \(t\) is described by a binary vector 
$
X_t = (X_{1,t},\dots,X_{n,t}) \in \{0,1\}^n,
$ 
where \(X_{i,t}=1\) indicates that component \(i\) (e.g.\ data loading, model training, evaluation) is running smoothly, and \(X_{i,t}=0\) indicates slowdown or timeout.  Define the system-wide congestion level by
$ H(\mu_t)
     \;=\;\mu_t\bigl(\{(1,\dots,1)\}\bigr)
$ which measures the fraction of users for whom every component of their workflow is running smoothly, and hence, serves as a proxy for the aggregate load placed on the shared infrastructure.

We define the nonlinear Markov chain
\begin{equation}
X_{t+1} \label{Equation:Supermodular}
=\begin{cases}
(Z,\dots,Z), &\text{w.p. }\rho(X_t,h_t),\;Z\sim\mathrm{Bern}(p),\\[4pt]
(Z_1,\dots,Z_n), &\text{w.p. }1-\rho(X_t,h_t),\;Z_i\overset{\mathrm{i.i.d.}}{\sim}\mathrm{Bern}(p).
\end{cases}
\end{equation}
where $p \in (0,1)$ is some baseline probability, $\mathrm{Bern}(p)$ is the Bernoulli random variable that yields $1$ with probability (w.p.) $p$ and zero otherwise, and $\rho (x,h) \in [0,1)$ is a continuous function that is decreasing in $h$ and supermodular in $x$.\footnote{Recall that for a lattice $S$, a function \( f : S \to \mathbb{R} \) is said to be \emph{supermodular} if for all \( x, y \in S \),
\[
f(x) + f(y) \leq f(x \vee y) + f(x \wedge y),
\]
where \( x \vee y \) and \( x \wedge y \) denote the least upper bound (join) and greatest lower bound (meet) of \( x \) and \( y \) in the lattice \( S \), respectively.
} 
  
In these dynamics, \(\rho\) can be seen as the probability of coordination.\footnote{For simplicity of exposition, we choose the perfectly coordinated vector \((Z,\dots,Z)\). However, the uniqueness proof in Claim~\ref{claim:supermodular}, which is based on the supermodular order, does not rely on this specific choice. The argument extends immediately to other coordinated vectors, as long as they are positively associated or comonotone and dominate \((Z_1,\dots,Z_n)\) in the supermodular order (see, e.g., \cite{hu2005dependence} for an establishment of such domination).
 } These dynamics encode two complementary forces. First, as the aggregate congestion level \(h_t\) rises, the shared infrastructure becomes strained, reducing the probability of a  coordinated shock. 
Second, the supermodularity of \(\rho\) in \(x\) implies that adding an additional smoothly running component increases the probability of coordinated continuation more when the other components are already smooth, capturing the natural idea that coordination is more likely when most components are already aligned. 

We now show that by choosing \( D \) as the set of supermodular functions, we can apply Theorem~\ref{Theorem: unique} to establish uniqueness.  In addition, in the proof of Claim~\ref{claim:supermodular} we show that taking \( D \) to be the set of increasing functions fails to satisfy the required monotonicity conditions for uniqueness.

\begin{claim} \label{claim:supermodular}
    The nonlinear Markov chain given in Equation (\ref{Equation:Supermodular}) has a unique invariant distribution. 
\end{claim}

\end{example}

\subsection{Necessity of the Monotonicity Conditions} \label{sec:necessity} 
In this section, we show that without the \( D \)-preserving and \( D \)-decreasing properties, there are simple examples in which uniqueness of the invariant distribution fails.

\begin{example} \label{example:decreasing}  ($Q$ is not $D$-decreasing). Suppose that $S=\{0,1\}$ endowed with the standard order ($1 \geq 1, 0 \geq 0,  1 > 0$) and $H(\mu) = \mu (\{1\}) $. Assume that $D$ is the set of all increasing functions so $\succeq_{D}$ is the standard stochastic dominance $\succeq_{SD}$. Note that  $H$ is increasing with respect $\succeq_{SD}$.   

 Consider the nonlinear Markov chain 
\[
Q'  = 
        \begin{blockarray}{c@{\hspace{2pt}}rr@{\hspace{3pt}}}
         & 0   & 1   \\
        \begin{block}{r@{\hspace{2pt}}|@{\hspace{2pt}}
    |@{\hspace{2pt}}rr@{\hspace{2pt}}||}
        0 & 1- \min(0.5,\mu (\{1\} )) & \min(0.5,\mu (\{1\})) \\
        1 & 0.5 & 0.5  \\
        \end{block}
    \end{blockarray}
\]
It is immediate that $ \pi (\{ 1 \}) = 1/2 = \pi(\{0\})$ and $ \pi ' (\{ 1 \}) = 0,  \pi '  (\{ 0 \}) = 1$ are invariant distributions of $Q'$. 
It is easy to verify that $Q'$ satisfies property (ii) of Theorem 1, and that $Q'$ is $D$-preserving but not $D$-decreasing. Hence all the conditions of Theorem 1 are satisfied except for the condition that $Q'$ is $D$-decreasing and $Q'$ has two invariant distributions. 

\end{example}

\begin{example} \label{example:preserving}
 ($Q$ is not $D$-preserving). Suppose that $S=\{0,1,2\}$ is endowed with the standard order and $H(\mu) = \mu (\{1\}) + \mu (\{2\}) $. Assume that $D$ is the set of all increasing functions so $\succeq_{D}$ is the standard stochastic dominance $\succeq_{SD}$. Note that  $H$ is increasing with respect $\succeq_{SD}$.

Consider the nonlinear Markov chain  
\[
Q'' = 
        \begin{blockarray}{c@{\hspace{1pt}}rrr@{\hspace{3pt}}}
         & 0   & 1   & 2 \\
        \begin{block}{r@{\hspace{1pt}}|@{\hspace{1pt}}
    |@{\hspace{1pt}}rrr@{\hspace{1pt}}|@{\hspace{1pt}}|}
        0 & 1/3 & 1/3 & 1/3 \\
        1 & 0 & H(\mu) & 1 - H(\mu) \\
        2 & H(\mu)  & 0   & 1-H(\mu)   \\
        \end{block}
    \end{blockarray}
\]

The distributions $ \pi (\{ 0 \}) =  \pi (\{ 1 \}) =  \pi (\{ 2 \}) = 1/3$ and $ \pi' (\{ 0 \}) = 0,  \pi' (\{ 1 \}) = 1,  \pi' (\{ 2 \})=0 $  are invariant distributions of $Q''$. 
It is easy to see that the Markov chain $Q''$ satisfies property (ii) of Theorem 1 and is $D$-decreasing. In addition, $Q''$ is not increasing in $x$, and hence, is not $D$-preserving as $Q''(1,h , \{1,2\} ) > Q''(2,h , \{1,2\} )$ for any $h >0$. Hence all the conditions of Theorem 1 are satisfied except for the condition that $Q''$ is $D$-preserving and $Q''$ has two invariant distributions. 

\end{example}


\subsection{Non-Convergence to the Invariant Distribution} \label{sec:convergence} Theorem \ref{Theorem: unique} and Proposition \ref{Prop:existence} provide sufficient conditions for the uniqueness of an invariant distribution for the nonlinear Markov kernel $Q$. However, these results do not provide conditions under which the sequence of measures $\mu_{t}$  converges weakly to the unique invariant distribution of $Q$.  Unfortunately, the following example shows that even in a very simple case, the monotonicity conditions that imply uniqueness do not imply convergence.  This is in sharp contrast with the contraction approach to study the invariant distributions of nonlinear Markov chain that guarantees convergence (e.g., \cite{butkovsky2014ergodic}). In Section~\ref{sec:Non-contraction} we show that the non-contraction property extends to the applications studied in this paper.

\begin{example} \label{Example:convergence} ($\mu_{t}$ does not converge to the unique invariant distribution).
Suppose that $S=\{0,1\}$ is endowed with the standard order and $H(\mu) = \mu (\{1\}) $. Assume that $D$ is the set of all increasing functions so $\succeq_{D}$ is the standard stochastic dominance $\succeq_{SD}$. Note that $H$ is increasing with respect $\succeq_{SD}$.    Consider the nonlinear Markov chain 
\[
Q = 
        \begin{blockarray}{c@{\hspace{2pt}}rr@{\hspace{3pt}}}
         & 0   & 1   \\
        \begin{block}{r@{\hspace{2pt}}|@{\hspace{2pt}}
    |@{\hspace{2pt}}rr@{\hspace{2pt}}||}
        0 & \mu (\{1\} ) & \mu (\{0\}) \\
        1 & \mu (\{1\}) & \mu (\{0\})  \\
        \end{block}
    \end{blockarray}
\]
It is easy to see that $ \pi (\{ 1 \}) = 1/2 = \pi(\{0\})$ is the unique invariant distribution of $Q$ and $Q$ 
 satisfies all the conditions of Theorem 1. Note that for any initial distribution $\mu_{1} (\{1\} )  = \gamma $ and $\mu_{1} (\{ 0 \}) = 1-\gamma$ with $\gamma \neq 1/2$, $\mu_{t}$ does not converge to $\pi$ as $\mu_{t} (\{1\} )  = \gamma $ and $\mu_{t} (\{ 0 \}) = 1-\gamma$ for an odd $t$ and $\mu_{t} (\{1\} )  = 1- \gamma $ and $\mu_{t} (\{ 0 \}) = \gamma$ for an even $t$. 
\end{example}

 Example \ref{Example:convergence} illustrates that the sequence of measures $\{ \mu _{t} \}$ does not converge to the unique invariant distribution in a simple example showing  that we can't expect the sequence of measures $\{ \mu _{t} \}$ to converge in typical applications. In that example, $ \sum _{t=1}^{T} \mu_{t} /T$ converges to the unique invariant distribution. However, Example \ref{Example: Average Convergence} shows that this is not always the case even when the conditions for uniqueness provided in Theorem \ref{Theorem: unique} hold. 

 \begin{example} \label{Example: Average Convergence}
     ($\sum _{t=1}^{T} \mu_{t} /T$ does not converge to the unique invariant distribution).
Suppose that $S=\{0,1\}$ is endowed with the standard order and $H(\mu) = \mu (\{1\}) $. Assume that $D$ is the set of all increasing functions so $\succeq_{D}$ is the standard stochastic dominance $\succeq_{SD}$. Note that $H$ is increasing with respect $\succeq_{SD}$.    Consider the nonlinear Markov chain 
\[
Q = 
        \begin{blockarray}{c@{\hspace{2pt}}rr@{\hspace{3pt}}}
         & 0   & 1   \\
        \begin{block}{r@{\hspace{2pt}}|@{\hspace{2pt}}
    |@{\hspace{2pt}}rr@{\hspace{2pt}}||}
        0 & \mu (\{1\} ) & \mu (\{0\}) \\
        1 & 1-f( \mu (\{0\})) & f( \mu (\{0\}))  \\
        \end{block}
    \end{blockarray}
\]
with 
\begin{align*}
    f(x ) & = x1_{ \{ x \leq 0.3 
\} } + (1.2x  -0.06) 1_{ \{ 0.3 < x \leq  0.5  \} } 
+  (0.8x  + 0.14) 1_{ \{ 0.5 < x \leq  0.7 \}  } +  x1_{ \{x > 0.7 
\} }
\end{align*} 
for $x \in [0,1]$. 
Note that $f (x ) \geq x$ and $f$ is increasing, and hence, the conditions of Theorem \ref{Theorem: unique} hold and there exists at most one invariant distribution. In addition, $f$ is continuous so from Proposition \ref{Prop:existence} the nonlinear Markov kernel $Q$ has a unique invariant distribution.

As in Example \ref{Example:convergence}, if the initial distribution is $\mu _{1} (\{0\} )  = 0.7$, then $\mu _{2}(\{0\})  = 0.3$, and $\mu_{3} (\{0\})  = 0.7$ and so on. 
But $\pi ( \{ 0 \} ) = \pi (\{1\}) = 1/2 $ is not an invariant distribution so $\sum _{t=1}^{T} \mu_{t} /T$ does not converge to the invariant distribution.  

 \end{example}


\subsection{Computation of the  Invariant Distribution } \label{sec:compute}

As discussed in the introduction, it is essential to develop a method capable of computing the invariant distribution of the nonlinear Markov chain. In this section, under the conditions of Proposition \ref{Prop:existence2},  we show that a straightforward bisection method achieves this computational goal.  In this method, we use bisection method for the function $f(h) = h - H(\mu_{h})$ on the interval $[h',h'']$  to find the root of $f$. 
We now describe a simple algorithm to compute the invariant distribution of $Q$.

\begin{algorithm}[htbp]
\caption{Bisection Method for Finding an Invariant Distribution}
\label{alg:bisection_simplified}
\begin{algorithmic}[1]
  \Require Interval \([h',h'']\) with \(f(h')<0<f(h'')\), tolerances \(\varepsilon_x>0\) and \(\varepsilon_y>0\) 
  \Ensure Approximate root \(h^*\) and invariant measure \(\mu_{h^*}\) satisfying \(\lvert b - a\rvert \le \varepsilon_x\) or \(\lvert f(h^*)\rvert \le \varepsilon_y\)
  \State \(a \gets h'\), \(b \gets h''\)
  \Repeat
    \State \(h_n \gets (a + b)/2\) \Comment{Midpoint}
    \State Compute \(\mu_{h_n}\) by solving \(\mu_{h_n}(B) = \int Q(x,h_n,B)\,\mu_{h_n}(dx)\)
    \State \(f(h_{n}) \gets h_n - H(\mu_{h_n})\)
    \If{\(\lvert f(h_{n})\rvert \le \varepsilon_y\)} 
      \State \(h^* \gets h_n\), \(\mu_{h^*} \gets \mu_{h_n}\) 
      \State \textbf{break}
    \ElsIf{\(f(h_{n}) < 0\)}
      \State \(a \gets h_n\)
    \Else
      \State \(b \gets h_n\)
    \EndIf
  \Until{\(\lvert b - a\rvert \le \varepsilon_x\)} 
   \If{not defined \(h^*\)}
  \State \(h^* \gets (a + b)/2\)
  \State Compute \(\mu_{h^*}\) for the final midpoint
\EndIf
  \State \Return \((h^*,\,\mu_{h^*})\)
\end{algorithmic}
\end{algorithm}

  \begin{proposition} \label{prop:compute}
   Suppose the assumptions of Proposition \ref{Prop:existence2} hold. Let $\{h_n\}$ be the sequence generated by Algorithm 1 with $h'$ and $h''$ as defined in Proposition \ref{Prop:existence2} and $\varepsilon_{x} = \varepsilon_{y} = 0$. Then, $\{h_n\}$ converges to $h^*$, and $\mu_{h^*}$ is an invariant distribution of $Q$.
   \end{proposition}

We note that under the conditions of Theorem \ref{Theorem: unique}, Algorithm 1 finds the unique invariant distribution of $Q$. 
In this case, it is typically immediate to compute the points $h'$ and $h''$ by using the monotonicity conditions. For example, consider the finite case $S = \{s_{1} ,\ldots , s_{n} \}$ with the standard order $s_{i} \geq s_{j}$ whenever $i \geq j$ and $\mathcal{P}(S)$ endowed with the standard stochastic dominance order $\succeq_{SD}$. Then $h'$ and $h''$ can be easily computed by applying the function $H$ to the Dirac measure centered on $s_{n}$  and the Dirac measure centered on $s_{1}$. For example, if $H (\mu)$ is the expected value operator, i.e., $H(\mu) = \sum _{s \in S} s \mu(\{s \}) $, then $h'=s_{1}$ and $h''=s_{n}$. Hence, the initial interval for the algorithm is $[s_{1},s_{n}]$. 

As an illustration, consider Example \ref{Example:convergence} where we provided a simple Markov chain that does not converge to the unique invariant distribution. We first identify the interval $[0,1]$ and $h' = h_{1}=0$, $h'' = h_{2}=1$ as explained above.  It is immediate that $H(\mu_{h}) = 1-h$, and hence, $f(h) = h - (1-h) = 2h-1$. Thus, the algorithm generates $h_{3} = 1/2$ which is the root of $f$ so the algorithm converges in the first iteration and the unique invariant distribution is $\mu_{h_{3}}$.

For a finite state with $n$ variables, the method described in Algorithm 1 is computationally efficient and straightforward to implement. In each iteration, the algorithm solves a linear equation with $n$ variables and $n+1$ constraints (enforcing that $\mu_{h}$ is a probability measure) to find the invariant distribution $\mu_{h}$. Then, the function $f$ is evaluated to proceed with the bisection method.\footnote{In particular, in each bisection step the aggregator is fixed and we solve a linear system 
rather than the original nonlinear fixed‑point problem. In terms of complexity, for a finite state space of size $n$, using standard Gaussian elimination the computational cost is $O(n^3)$ per bisection step. The bisection itself needs at most 
$
\bigl\lceil\log_2\!\bigl((h''-h')/\varepsilon_{\text{tol}}\bigr)\bigr\rceil
$
iterations to reach accuracy~$\varepsilon_{\text{tol}}$, so the overall complexity of Algorithm \ref{alg:bisection_simplified} is  
$O\!\bigl(n^3\log((h'' - h') /\varepsilon_{\text{tol}})\bigr)$. Hence, Algorithm~\ref{alg:bisection_simplified} is efficient: it is polynomial in the number of states and logarithmic in the desired precision.
} In Section~\ref{sec:Non-contraction}, we illustrate that Algorithm~\ref{alg:bisection_simplified} efficiently computes the unique invariant distribution in both our strategic queueing application and our wealth distribution application.


This approach is consistent with many well-known algorithms for solving hard optimization problems, where each iteration involves solving a simpler subproblem. For example, in cutting-plane methods to solve integer programming problems, each iteration requires solving a linear program to refine the feasible region. In the case of Algorithm \ref{alg:bisection_simplified}  described above, each iteration requires solving a linear equation in order to find the solution  of the nonlinear equation that describes the invariant distribution of the nonlinear Markov kernel $Q$.

 \subsection{Local Results} \label{sec:localresults}

In this section, we present a localized version of Theorem \ref{Theorem: unique}. Rather than applying the monotonicity conditions and Properties (U) and (C) to all probability measures as in Theorem 1, we introduce localized versions of these conditions that apply only in certain regions of the probability space. These local versions pertain only to a particular subset of probability measures that have specific interest. These conditions ensure that, within this subset, $Q$ has at most one invariant distribution. This subset may encompass probability measures that naturally emerge as candidates for invariant distributions or probability measures that are relevant for an application of interest. For a non-empty subset $\mathcal{W}$ of $\mathcal{P}(S)$ let $\mathcal{H}_{\mathcal{W}} = \{ H(\mu) : \mu \in \mathcal{W} \}$ .

\begin{definition} Let $\mathcal{W}$ be a non-empty subset of $\mathcal{P} (S)$  

(i) We say that $Q$ satisfies Property (U) on $\mathcal{W}$  if for any $h \in \mathcal{H}_{\mathcal{W}}$, the operator $M_{h}$ has a unique fixed point $\mu _{h}$. 

(ii) We say that $Q$ satisfies Property (C) on $\mathcal{W}$
if $Q$ satisfies Property (U) on $\mathcal{W}$ and  $M_{h}^{n} \theta $ converges weakly to $\mu _{h}$ for any probability measure $\theta  \in \mathcal{W}$ and any $h \in \mathcal{H}_{\mathcal{W}}$. 
\end{definition}

Similarly, we provide local versions for the monotonicity and preservation properties introduced in Section \ref{sec:definition}.

\begin{definition}
Let $D \subseteq \mathbb{R}^{S}$. 

We say that $Q$ is $D$-decreasing on $\mathcal{W}$ if for each $x \in S$, we have $Q (x ,h_{1} , \cdot ) \succeq  _{D}Q (x , h_{2}, \cdot )$ whenever $h_{2} \geq h_{1}$, $h_{1},h_{2} \in \mathcal{H}_{\mathcal{W}}$.

We say that $Q$ is $D$-preserving on $\mathcal{W}$ if for all  $h \in \mathcal{H}_{\mathcal{W}}$ the function 
\begin{equation*}
    v(x):=\int f (y) Q(x ,h ,dy) 
\end{equation*}
is in $D$ whenever $f \in D$.

\end{definition}

The following Proposition generalizes Theorem \ref{Theorem: unique}. 

\begin{proposition} \label{Prop:local}

    Let $\mathcal{W}$ be a non-empty subset of  $\mathcal{P}(S)$. 
    Let $D \subseteq \mathbb{R}^{S}$ be a non-empty set such that $H$ is increasing with respect to $\succeq  _{D}$ on $\mathcal{W}$.

    Assume that $Q$ is $D$-preserving on $\mathcal{W}$ and $D$-decreasing on $\mathcal{W}$.

Suppose that $M_{h} \theta \in \mathcal{W}$ whenever $\theta \in \mathcal{W}$ and $h \in \mathcal{H}_{\mathcal{W}}$. 

Assume that either of the following conditions hold: 

(i) $Q$ satisfies Property (C) on $\mathcal{W}$ and $\succeq_{D}$ is closed with respect to weak convergence. 

(ii) $Q$ satisfies Property (U)  on $\mathcal{W}$ and $(\mathcal{W},\succeq_{D})$ is a complete lattice.

Then $Q$ has at most one invariant distribution on $\mathcal{W}$. 
\end{proposition}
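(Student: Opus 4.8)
The plan is to run the argument behind Theorem \ref{Theorem: unique} verbatim, but invoking each hypothesis only on $\mathcal{P}'(S)$ (and, for Property (C), remembering that its local definition already lets the test measure $\theta$ range over all of $\mathcal{P}(S)$). So let $\mu_1,\mu_2\in\mathcal{P}'(S)$ be two invariant distributions of $Q$; the goal is to show $\mu_1=\mu_2$. Put $z_i:=H(\mu_i)\in\mathbb{R}$, so without loss of generality $z_1\le z_2$. Since $\mu_i$ is invariant, $\mu_i=T\mu_i=M_{z_i}\mu_i$, so $\mu_i$ is a fixed point of the linear kernel $M_{z_i}$; as $z_i\in\mathcal{H}_{\mathcal{P}'(S)}$, Property (U) on $\mathcal{P}'(S)$ gives $\mu_i=\mu_{z_i}$, the unique fixed point of $M_{z_i}$.

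Next I would isolate two monotonicity facts for the linear operators $M_z$, $z\in\mathcal{H}_{\mathcal{P}'(S)}$. First, $D$-decreasing on $\mathcal{P}'(S)$ says that $z_1\le z_2$ implies $Q(x,z_1,\cdot)\succeq_D Q(x,z_2,\cdot)$ for every $x$; integrating this inequality (pointwise in $x$) against any $\theta\in\mathcal{P}(S)$ yields $M_{z_1}\theta\succeq_D M_{z_2}\theta$. Second, $D$-preserving on $\mathcal{P}'(S)$ says that for $f\in D$ the function $v(x)=\int f(y)\,Q(x,z,dy)$ is again in $D$; since $\int f\,d(M_z\theta)=\int v\,d\theta$, this means $M_z$ is $\succeq_D$-monotone in its argument. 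Combining these with $z_1\le z_2$ and $M_{z_2}\mu_2=\mu_2$ gives $M_{z_1}\mu_2\succeq_D M_{z_2}\mu_2=\mu_2$, and then applying the monotone operator $M_{z_1}$ repeatedly shows that $\nu_n:=M_{z_1}^{\,n}\mu_2$ is a $\succeq_D$-increasing chain with $\nu_n\succeq_D\mu_2$ for all $n$.

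It remains to pass to a limit. Under (i), Property (C) on $\mathcal{P}'(S)$ (applied with test measure $\mu_2\in\mathcal{P}(S)$) gives $\nu_n\to\mu_{z_1}=\mu_1$ weakly; since $\succeq_D$ is closed under weak convergence and $\nu_n\succeq_D\mu_2$ for all $n$, letting $n\to\infty$ gives $\mu_1\succeq_D\mu_2$. Under (ii), I would instead invoke the Knaster--Tarski fixed-point theorem: $M_{z_1}$ is $\succeq_D$-monotone and $\mu_2$ satisfies $M_{z_1}\mu_2\succeq_D\mu_2$, so $M_{z_1}$ has a fixed point dominating $\mu_2$ in $\succeq_D$; by Property (U) that fixed point is $\mu_1$, so again $\mu_1\succeq_D\mu_2$. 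In either case, since $\mu_1,\mu_2\in\mathcal{P}'(S)$ and $H$ is increasing with respect to $\succeq_D$ on $\mathcal{P}'(S)$, we get $z_1=H(\mu_1)\ge H(\mu_2)=z_2$; with $z_1\le z_2$ this forces $z_1=z_2=:z$, hence $\mu_1=\mu_z=\mu_2$, which is the claim.

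The step I expect to be the main obstacle is the lattice-theoretic fixed-point argument in case (ii): $M_{z_1}$ is a priori only an operator on $\mathcal{P}(S)$, whereas completeness is assumed only for $(\mathcal{P}'(S),\succeq_D)$, so one must apply Knaster--Tarski on a complete sublattice that $M_{z_1}$ actually stabilizes --- the natural candidate being an order interval of the form $\{\theta:\theta\succeq_D\mu_2\}$ --- and verify both that this interval is a complete lattice and that $M_{z_1}$ maps it into itself. In the global Theorem \ref{Theorem: unique} this is automatic since the ambient lattice is all of $\mathcal{P}(S)$; localizing it is where care is needed. Everything else --- the two monotonicity lemmas for $M_z$ and the reduction via Property (U) --- is routine, modulo the standard integrability provisos built into the definition of $\succeq_D$.
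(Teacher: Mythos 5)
Your argument for case (i) is correct and is essentially the paper's own argument for Theorem \ref{Theorem: unique}(i), localized: you take both starting points equal to $\mu_2$ (so the right-hand chain is constant at $\mu_2$), iterate $M_{z_1}$ using $D$-preservation and $D$-decrease on $\mathcal{P}'(S)$ to get $M_{z_1}^n\mu_2\succeq_D\mu_2$, and pass to the limit via Property (C) on $\mathcal{P}'(S)$ and weak-closedness of $\succeq_D$. All the localized hypotheses are invoked at exactly the right places (in particular Property (C) on $\mathcal{P}'(S)$ deliberately lets the test measure range over all of $\mathcal{P}(S)$, which you use).

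For case (ii), you take a slightly different route than the paper. The paper applies a parametric monotone-fixed-point result (Topkis, Corollary 2.5.2) to $M:(\theta,z)\mapsto M_z\theta$ on the complete lattice, which gives directly that the unique fixed point $\mu_z$ is $\succeq_D$-monotone in $z$; you instead fix $z_1$, note $M_{z_1}\mu_2\succeq_D\mu_2$, and invoke Knaster--Tarski to produce a fixed point of $M_{z_1}$ above $\mu_2$, which by Property (U) must equal $\mu_1$. These are equivalent arguments in the global setting. However, the gap you flag is real and not cosmetic: both your Knaster--Tarski argument and the paper's Topkis argument require the relevant complete lattice to be invariant under the operator. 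In the global Theorem \ref{Theorem: unique}(ii) the lattice is all of $\mathcal{P}(S)$, so $M_z$ trivially maps it to itself. In Proposition \ref{Prop:local}(ii) the complete lattice is only $(\mathcal{P}'(S),\succeq_D)$, and nothing in the stated hypotheses guarantees that $M_z$ maps $\mathcal{P}'(S)$ (or your order interval $\{\theta\in\mathcal{P}'(S):\theta\succeq_D\mu_2\}$) into itself for $z\in\mathcal{H}_{\mathcal{P}'(S)}$. So the lattice-theoretic fixed-point theorem cannot be applied as stated; an additional stability hypothesis such as $M_z(\mathcal{P}'(S))\subseteq\mathcal{P}'(S)$ for all $z\in\mathcal{H}_{\mathcal{P}'(S)}$ is needed. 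You honestly identify this obstacle but do not resolve it, so your proof of case (ii) is incomplete. The paper itself omits the proof of Proposition \ref{Prop:local} with the remark that it is ``similar to the proof of Theorem \ref{Theorem: unique},'' and does not address this localization issue; your observation therefore points at a gap in the paper's claim as well, not just in your attempt.
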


The proof of Proposition \ref{Prop:local} is similar to the proof of Theorem \ref{Theorem: unique} and is given in the Appendix.

\section{Applications} \label{sec:applications}

In this section we present our applications. In Section \ref{sec:queueing} we study the invariant distribution of a G/G/1 queueing system where arrivals depend on the expected waiting times. In Section \ref{sec:inventory} we study a dynamic inventory competition model. In Section \ref{Sec:nonlinear} we study nonlinear equations that do not necessarily satisfy contraction properties and have a unique solution. In Section \ref{sec:wealth} we study the invariant distribution of wealth distributions in dynamic economies where the rate of returns depends on the aggregate savings in the economy. 

\subsection{Strategic Behavior in Queueing Systems} \label{sec:queueing}
 A considerable body of literature exists on strategic behavior in queueing systems. Within this domain, the inter-arrival times often depend on the queue length or expected waiting time, as agents, being strategic, can opt not to join the queue if they foresee an extended waiting period  \citep{hassin2003queue}.  Typically, queueing systems are examined in the steady state, making it essential to study the existence of a unique steady state generated by the system to obtain robust comparative statics results that do not depend on the specific choice of equilibrium. We will now demonstrate how Theorem \ref{Theorem: unique} can be used to establish that there is a unique invariant distribution for the waiting time distribution within a general $G/G/1$ strategic queueing system, wherein the inter-arrival times are contingent on the expected waiting time.\footnote{Other nonlinear Markov chains were analyzed in the strategic queueing literature. For example, \cite{xu2013supermarket} show that a supermarket game where customers strategically choose which queue to join has a unique equilibrium under certain monotonicity conditions. See  \cite{mukhopadhyay2016randomized} and \cite{yang2018mean} for further related models. }

Consider a $G/G/1$ queue where the time between the $t$th and $t+1$th arrivals is given by the random variable $T_{t}$ and the service time of the $t$-th customer is given by the random variable $S_{t}$. Because agents are strategic they are less likely to join the queue when the waiting time is longer. We assume that the time between arrivals depends on the expected waiting time,\footnote{Announcing average waiting times to customers is a common practice in queue management \citep{bassamboo2021general}, particularly in environments like theme parks, where it helps manage crowd flow and set visitor expectations. In practice, posted wait times can be calculated using a variety of factors and are not solely based on the distribution of the last agent's waiting time.} represented as 
$ T_t(  \mathbb{E}(X_t) ) $, where $ X_t $ is the waiting time of the $ t $th customer. 
To capture that when the expected waiting time increases, fewer agents join the queue, we assume that $ T_t(h) \succeq_{SD} T_t(h') $ whenever $ h \geq h' $, for $ h, h' \in \mathbb{R}_{+} $. 
In other words, the time between arrivals becomes stochastically longer as the expected waiting time rises.  We assume that $
(S_{t})_{t \in \mathbb{N}}$ are identically distributed and independent random variables with positive finite expectations and finite variances, and $T_{t}(h)$ has bounded first two moments, is continuous, and $\{ T_{t}(h) \}$ are independent random variables across time for each $h \geq 0$. We also assume $\mathbb{E}T_{t}(0) > \mathbb{E}S_{t}$ so the (linear) G/G/1 queueing system for a fixed $h$ is stable. 

The expected waiting times experienced by customers in the queue evolve by the following nonlinear Markov chain on $\mathbb{R}_{+}$: 
\begin{equation} \label{Eq:queue} X_{t+1} = \max (0, X_{t} + S_{t} - T_{t} (\mathbb{E}(X_t) ) ) .
\end{equation}

It can be easily verified that $Q$ is $D$-preserving and $D$-decreasing when $D$ is the set of all increasing functions. Under the 
 assumption stated above that the queue does not explode, i.e., $\mathbb{E}S_{t} < \mathbb{E}T_{t}(0)$, a standard argument from the Markov chain literature (e.g., Theorem 
19.3.5 in \cite{meyn2012markov}) shows that Property (C) holds. Hence, we can use Theorem \ref{Theorem: unique} to conclude that there exists at most one waiting time equilibrium steady state distribution. We show that existence of an invariant distribution follows from Proposition \ref{Prop:existence2}. The proofs of all the Corollaries are deferred to Section \ref{Sec:CorrProofs} in the Appendix. 

\begin{corollary} \label{Corr:Queue}
    The nonlinear Markov chain describing the queueing system in Equation (\ref{Eq:queue}) has a unique invariant distribution. 
\end{corollary}

As a particular example, we study  an M/G/1 queueing system where the arrival rate depends on the expected waiting time and provide a closed-form expression for the stationary expected waiting time.

\begin{example} ($M/G/1$ queue). 
Consider an $M/G/1$ queue so the time between arrivals has an exponential distribution. Let  $Law(S_{t}) = Law (S)$ and $Law(T_{t}(h)) $ has an exponential distribution with the parameter $\lambda(h)$. Suppose that the mean interarrival time equals the expected waiting time so $\lambda(h) = 1/h$. 

\begin{claim} \label{claim_queue} There is a unique invariant distribution for the nonlinear Markov chain given in Equation (\ref{Eq:queue}) and the expected value of the stationary waiting time $X_{\infty}$ is given by the closed-form expression
$$ \mathbb{E}(X_{\infty}) = \frac{\mathbb{E}(S^{2})} {\sqrt{\mathbb{E}(S)^{2}+2\mathbb{E}(S^{2})} - \mathbb{E}(S)} .$$

In particular, if the queue is an $M/M/1$ queue so $S$ is an exponential random variable then 
$$ \mathbb{E}(X_{\infty}) = \frac{ 2\mathbb{E}(S)} {\sqrt{5}-1}. $$
    
\end{claim}


\end{example}

In Section~\ref{sec:Non-contraction}, we provide a numerical illustration showing that the nonlinear Markov chain describing the strategic queueing system in Equation~(\ref{Eq:queue}) is not a contraction under standard metrics, even in the simple M/M/1 case. We then deploy Algorithm~\ref{alg:bisection_simplified} to compute the unique invariant waiting time distribution.

\subsection{Dynamic Pricing and Inventory Replenishment} \label{sec:inventory}
There is a rich body of work in operations on dynamic retail inventory competition under stochastic demand (e.g., \cite{liu2007dynamic}, \cite{adida2010dynamic} \cite{olsen2014markov}, \cite{bansal2022monge} to name a few). We contribute to this literature by establishing conditions that guarantee the existence of a unique invariant distribution in a general inventory dynamics model and by introducing an algorithm to compute this distribution, even in the presence of nonlinearities and the absence of contraction. 

Consider a population of ex-ante identical retailers indexed by $j$. Each retailer faces stochastic demand and chooses both a price and a replenishment quantity each
period. 
The state of each retailer at time \( t \) is given by its inventory level \( X_{t}^{j} \in \{0,1,\ldots,C\} \), where \( C > 0\) is a fixed capacity. Let \(\mu_{t}\) be the law of \( X_{t}^{j} \) across all retailers. We define a continuous aggregator \( H(\mu_{t}) \) that maps the distribution of inventory levels to a real number. For instance, \( H(\mu_{t}) \) could be the average inventory across all retailers and is assumed to be increasing with respect to stochastic dominance. In applications, the aggregator can also be derived from the underlying model, such as in stockout-based inventory substitution \citep{olsen2014markov} where the demand for an out-of-stock product from one retailer is reallocated to substitute products offered by another retailer based on predefined substitution probabilities, creating a dependency between the inventories of different retailers.

At the beginning of period \( t \), each retailer observes its own inventory \( X_{t}^{j} \) and the aggregator \( H(\mu_{t}) \), and sets a price \( p_{t}^{j} = \pi(X_{t}^{j}, H(\mu_{t})) \) given 
 some continuous pricing policy $\pi$. 
Given this price and the aggregator, the demand \( D_{t}^{j} \) faced by retailer \( j \) in period \( t \) is a discrete non-negative random variable with a distribution that depends on both \( p_{t}^{j} \) and \( H(\mu_{t}) \). After sales occur, the retailer replenishes a quantity \( g(X_{t}^{j}, H(\mu_{t})) \) units of inventory, where \( 0 \leq g(X_{t}^{j}, H(\mu_{t})) \leq C - X_{t}^{j} \) ensures that the next period's inventory does not exceed capacity and the replenishment policy $g$ is  assumed to be continuous.\footnote{Pricing and replenishment policies are well studied for such a setting under different demand models, e.g.,  \citep{chen2019coordinating, chen2021nonparametric, keskin2022data}.} For simplicity, we assume that all retailers share the same pricing policy function, which depends on their current inventory levels and the aggregator, as well as the same demand structure, which is determined by their price and the aggregator. However, it is straightforward to extend the model by introducing retailer-specific types that influence both their policy functions and demand structures, allowing for ex-ante heterogeneity across retailers. Thus, $D_{t}^{j} ( p , h)$ is independent and identically distributed across
time and across agents given the price and aggregator.

Hence, the inventory evolves according to the nonlinear Markov chain:\footnote{We note that the state recursion in \eqref{eq:inventory} mirrors the inventory dynamics in earlier works such as \citet{liu2007dynamic}, \citet{adida2010dynamic}, and \citet{olsen2014markov}, in the sense that next-period inventory equals current inventory minus stochastic demand (which depends on competitors) plus a replenishment order. These papers typically consider duopolies or small oligopolies and study Nash or Markov-perfect equilibria in that setting. In contrast, we analyze a distribution-dependent version of the problem, where retailers interact only through an aggregator and focus on conditions that imply the uniqueness of a stationary equilibrium.
} 
\begin{equation}\label{eq:inventory}
X_{t+1}^{j} = (X_{t}^{j} - D_{t}^{j}(\pi(X_{t}^{j},H(\mu_{t}) ), H(\mu_{t}) )) _{+} + g(X_{t}^{j}, H(\mu_{t}))
\end{equation}
where $(x)_{+} = \max(x,0)$. A stationary equilibrium for this model  corresponds to an invariant distribution of the nonlinear Markov chain described by the equation above. The
equilibrium represents a stable long-run configuration of inventories and aggregator values. 

We can apply Algorithm \ref{alg:bisection_simplified} to find the equilibrium of the system. In addition, under suitable monotonicity conditions we now present, Theorem  \ref{Theorem: unique} can be applied to ensure the uniqueness of this stationary equilibrium. The proof of the following Corollary follows immediately from Proposition \ref{Prop:existence} and Theorem \ref{Theorem: unique} so we omit it.  We will  write $D(\pi(x,h),h)$ to describe the dependence of the random demand on the pricing policy and aggregator. 

\begin{corollary} \label{corr:inventory}
    Suppose that $H$ is increasing with respect to stochastic dominance and the following two conditions hold: 

    (1) The function $$f(x,h):=  \Pr \left [  (x - D(\pi(x,h ), h)) _{+} + g(x, h) \geq c \right]$$ is increasing in $x$ and decreasing in $h$ for each $c = 0 ,\ldots ,C$. 

    (2) The linear Markov chain 
    $$ X_{t+1} = (X_{t} - D_{t}(\pi(X_{t},h ), h )) _{+} + g(X_{t}, h)$$
    has a unique stationary distribution for each $h$. 

Then the nonlinear Markov chain describing the inventory system in Equation (\ref{eq:inventory}) has a unique invariant distribution. 
\end{corollary}
The first condition in Corollary \ref{corr:inventory} guarantees that $Q$ is $D$-preserving and $D$-decreasing. Specifically, $f$ increasing in $x$ means, intuitively, that a higher current inventory 
 makes higher future inventory levels more likely. 
$f$ decreasing in $h$ means, intuitively, that market saturation, in terms of inventories, reduces the probability of higher future inventory levels. Overall, while these conditions may not hold in some models, they are intuitive for practical settings.

The second condition is technical in nature and guarantees that property (U) holds. It is  easy to establish using standard irreducibility arguments for finite Markov chains when  there is sufficient randomness or variability in the demand and replenishment policies.

\subsection{Nonlinear Equations} \label{Sec:nonlinear}

The study of nonlinear systems of equations in $\mathbb{R}^{n}$ has long been a significant area of interest in mathematics and its applications. Finding conditions that ensure a unique solution to such systems is crucial as it offers insights into the properties and stability of solutions, which in turn, have far-reaching implications across various fields, including operations, engineering, economics, and optimization.  
It is generally uncommon to identify a comprehensive set of conditions that guarantee a unique global solution for a system of nonlinear equations in $\mathbb{R}^{n}$ that do not satisfy contraction properties. We apply Theorem \ref{Theorem: unique} to determine conditions that ensure a unique solution for a specific class of nonlinear equations, which we define subsequently. These conditions are based  on monotonicity concerning the majorization order as opposed to typical approaches that require contraction.

Let $\Delta_{n} = \{ \boldsymbol{x} \in \mathbb{R}^{n}: \sum_{i=1}^{n}  x_{i} = 1, x_{i} \geq 0 \text{ }\forall i \} $ be the $n$-dimensional simplex.  
Consider a stochastic matrix $\boldsymbol{P} (G(\boldsymbol{x})) \in \mathbb{R}^{n\times n}$ that is parameterized by $G(\boldsymbol{x})$ where  $G:\Delta _{n} \rightarrow A$  and $A \subseteq \mathbb{R}$ is the image of $G$, i.e., $P_{ij} (a) \geq 0$, and $\sum _{j=1}^{n} P_{ij} (a) = 1$ for all $a \in A$. We assume that $G$ is a continuous function.

For $\boldsymbol{x},\boldsymbol{y} \in \mathbb{R}^{n}$ write $\boldsymbol{x} \geq _{m} \boldsymbol{y}$ if $ \sum _{j=k}^{n} x_{j} \geq \sum _{j=k}^{n} y_{j}$ for all $1 \leq k \leq n$ and $\sum_{j=1}^{n} x_{j} = \sum _{j=1}^{n} y_{j}$ (the order $\geq _{m}$ is sometimes called majorization between vectors in $\mathbb{R}^{n}$). We denote by $\boldsymbol{P}_{i}(a)$ the $i$th row of the matrix $\boldsymbol{P}$.

The following Corollary follows  from applying Theorem \ref{Theorem: unique} and Proposition \ref{Prop:existence}. 

\begin{corollary} \label{Coro:nonlinear}
Let $G:\Delta _{n} \rightarrow A$ be a continuous function that  is increasing with respect to $ \geq _{m}$. The nonlinear system of equations $\boldsymbol{x} = \boldsymbol{x} \boldsymbol{P} (G(\boldsymbol{x}))$ on $\Delta_{n}$ where $\boldsymbol{P} (G(\boldsymbol{x}))$ is a stochastic matrix that is parameterized by $G(\boldsymbol{x})$ has a unique solution if the following three conditions hold:

(1) For all $a \in A$, $i \geq i'$, we have  $\boldsymbol{P}_{i}(a) \geq _{m} \boldsymbol{P}_{i'}(a)$.

(2) For all $1 \leq  i \leq n$, $a' \geq a$, $a,a' \in A$, we have  $\boldsymbol{P}_{i}(a) \geq _{m} \boldsymbol{P}_{i}(a')$.


(3) For all $a \in A$, the linear system of equations $\boldsymbol{x} = \boldsymbol{x} \boldsymbol{P} (a)$ for $\boldsymbol{x} \in \Delta_{n}$ has a unique solution.

\end{corollary}

Corollary \ref{Coro:nonlinear} yields a simple recipe for producing nonlinear equations with unique solutions, even in settings where no global contraction can be exhibited.  For instance, we can take 
$
G(x)\;=\;\sum_{i=1}^n c_i\,x_i$ with 
$c_1\le c_2\le\cdots\le c_n,
$
so that $G(x)$ is increasing with respect to $\geq_{m}$ and the corresponding quadratic system
$\boldsymbol{x} = \boldsymbol{x} \boldsymbol{P} (G(\boldsymbol{x}))$ on $\Delta_{n}$  has a unique solution in the simplex if it 
satisfies the monotonicity conditions of Corollary \ref{Coro:nonlinear} that are typically easy to check. This is despite the fact that the map $\boldsymbol{x} \boldsymbol{P} (G(\boldsymbol{x})) $ need not be a contraction. 


\subsection{Wealth Distributions} \label{sec:wealth}

In heterogeneous-agents macroeconomic models (see \cite{stachurski2009economic} for a recent textbook treatment of economic dynamic models), agents determine their consumption, savings, and allocation of savings across financial assets based on their current wealth level in each period.

An extensive literature exists on these models, specifically focusing on the analysis of stationary equilibria and the associated stationary wealth distributions. Despite the vast body of research, the conditions ensuring the uniqueness of equilibrium are restricted to a handful of special cases.\footnote{For instance, see \cite{light2020uniqueness, light2023general, achdou2022income}. } In this section, we employ Theorem \ref{Theorem: unique} to prove the uniqueness of a stationary equilibrium under a typical progression of wealth dynamics in these models, given that agents' savings increase with the rate of returns and their current wealth levels. We proceed to outline the model.

In each period $t$, there are $n$ non-negative random variables $R_{1,t},\ldots,R_{n,t}$ with bounded supports $[0, \overline{r}]$ that represent returns from different financial assets $i=1,\ldots,n$.  
The random return $R_{i,t}$ of asset $i$ is parameterized by a continuous aggregator $H(\mu)$ and we write $R_{i,t}(H(\mu))$ to capture this dependence. The aggregator is a function of the wealth distribution in the economy $\mu$ and  is increasing with respect to stochastic dominance. In many applications the aggregator is given by the total savings or wealth in the economy (e.g., \cite{aiyagari1994}). We assume that $R_{i,t}(h)$ is independent and identically distributed across time for each $i=1,\ldots,n$ and each $h$. For notational simplicity we sometimes write $R_{i}(h)$ instead of $R_{i,t}(h)$ to describe the random return of asset $i$ given the aggregator, and we assume that $R_{i}(h)$ is continuous for $i=1,\ldots,n$.

Each agent has a Markovian policy $\boldsymbol{g} = (g_{1},\ldots,g_{n})$, which is a vector of functions that determines how wealth is allocated across assets. Specifically,  $g_{i}(R_{1}(\mu),\ldots,R_{n}(\mu),x)$ represents the non-negative amount that an agent with wealth $x$ allocates to asset $i$ when the current  returns are given by $(R_{1}(\mu),\ldots,R_{n}(\mu))$. More formally, 
let \(\mathcal{T}\) denote the space of random variables with support on \([0, \overline{r}]\) then each function \( g_i : \mathcal{T}^n \times \mathbb{R}_+ \to \mathbb{R}_+ \) determines the allocation to asset \( i \) based on the returns and the agent’s wealth.\footnote{We assume for simplicity that the agents policy function depends on their current  wealth and returns only. All the results in this section can be easily extended to the case when each agent uses a different policy that depends on the agent's specific features such as preferences or behavioral biases.} In applications, the agent's policy is typically derived from a consumption-saving dynamic programming problem.  In our analysis, we assume a general policy function that can be deduced from rational agents, behavioral biases  \citep{acemoglu2024equilibrium}, myopic agents, or learning algorithms. We assume that $g_{i}$ is continuous for $i=1,\ldots,n$.

 In each period $t$, each agent $j$ receives a non-negative random income $Y_{t}^{j}$ that is independent and identically distributed across time and across agents and has a bounded support $[0,\overline{y}]$. Note that the returns $R_{i,t}(h)$ depend on the wealth distribution in the economy and are common to all agents while the random income $Y_{t}^{j}$ represents agent-specific noise.

Each agent's wealth evolution is described by the following nonlinear Markov chain:
\begin{equation} \label{eq:wealth}
X^{j}_{t+1} = \sum _{i=1}^{n} g_{i}(R_{1}(H(\mu _{t} )),\ldots , R_{n} (H(\mu_{t}) ) ,X^{j}_{t} )R_{i}(H(\mu _{t}) ) + Y ^{j}_{t+1} 
\end{equation}
 where $X_{t}^{j}$ is the current wealth agent $j$ has,  and $\mu_{t}$ is the law of $X_{t}^{j}$ which describes the  wealth distribution across agents in period $t$. Thus, if an agent has a current wealth of  $x_{t}$, the agent allocates $g_{i}$ to asset $i$, then the next period's wealth is given by the sum of the returns on these investments plus the income received in the next period. A stationary equilibrium in this economy is defined by an invariant distribution of the nonlinear Markov chain given in Equation (\ref{eq:wealth}) with the interpretation that this distribution represents the long run equilibrium wealth distribution across agents \citep{aiyagari1994,acemoglu2012}. 

 Under standard assumptions, the policy function is increasing in the current wealth, i.e., savings increase when the agent's wealth is higher, and the returns are decreasing in the savings with respect to first order stochastic dominance, i.e., the returns are (stochastically) lower when the total savings are higher (see \cite{acemoglu2012}, and \cite{acemoglu2024equilibrium}). Under these assumptions, we can apply Theorem \ref{Theorem: unique} to conclude that there is at most one stationary wealth distribution equilibrium if the total amount of savings $\sum g_{i}$ is increasing in the rate of returns. In the economics literature, this property means that the substitution effect dominates the income effect. Hence, the key condition that implies that there is at most one stationary wealth distribution  equilibrium is that  savings increase with the rate of returns. We now present this result formally. 

 \begin{corollary} \label{Coro:wealth}
     Suppose that $H(\mu)$ is increasing with respect to $\succeq_{SD}$ and assume that $g_{i} \leq M$ for some $M$ for each $i$.\footnote{The assumption that $g_{i}$ is bounded is used only to prove existence. We note that the existence of the stationary wealth distribution equilibrium is widely studied in the literature (e.g., \cite{acikgoz2015existence},  \cite{acemoglu2012}, \cite{zhu2020existence}, and \cite{light2022mean}) where the boundedness of $g_{i}$ can be established by considering the consumption-savings dynamic programming problem the agents' solve or by assuming an exogenous savings bound.} Assume that:

     (1) Property (C) holds.\footnote{There is a vast literature on conditions that ensure that Property (C) holds in different models of wealth dynamics by employing results from the standard Markov chain literature. For recent results see \cite{ma2020income}.}
     
     (2) The function $\sum g_{i}$ is increasing in $x$ and decreasing in the aggregator in the sense that 
     $$ \sum _{i=1}^{n} g_{i} (R_{1} (h_{2} ), \ldots , R_{n}  ( h_{2} ) , x_{2} ) \geq  \sum _{i=1}^{n} g_{i} (R_{1} (h_{1} ), \ldots , R_{n}  ( h_{1} ) , x_{1} )  $$
     whenever $x_{2} \geq x_{1}$ and $h_{1} \geq h_{2}$. 

     (3) For $i=1,\ldots,n$, $R_{i}(h_{2}) \succeq_{SD} R_{i}(h_{1})$ whenever $h_{1} \geq h_{2}$. 

     Then the nonlinear Markov chain  described in Equation (\ref{eq:wealth}) has a unique invariant distribution. 
 \end{corollary}

A special case of the last result with one financial asset  that has a constant interest rate and rational agents is the model by \citep{aiyagari1994}. Uniqueness for this model is studied in \cite{light2020uniqueness} which establishes all the conditions presented in Corollary \ref{Coro:wealth} for the case where agents maximize expected utility with constant relative risk aversion coefficient that is less than or equal to $1$. In Section~\ref{sec:noncontraction:wealth}, we numerically solve such a model, demonstrate that the resulting nonlinear Markov chain is generally not contractive, and compute its invariant wealth distribution using Algorithm~\ref{alg:bisection_simplified} (Section \ref{sec:compute}).

\section{Conclusions} 

This paper studies discrete-time nonlinear Markov chains with an aggregator and establishes  conditions that imply the uniqueness and existence of an invariant distribution for these chains. Unlike traditional approaches that rely on contraction properties of the chains, our conditions leverage monotonicity properties and the aggregator structure to establish uniqueness.
 We provide a computational method to compute the invariant distribution and apply our results to different settings including strategic queueing systems, inventory dynamics,  nonlinear equations, and the evolution of wealth distributions in dynamic economies. We believe that our results can be applied to other models where the flexible monotonicity conditions we provide are naturally satisfied.

Important open questions remain concerning nonlinear Markov chains. For instance, our examples show that even in a simple two-state chain, convergence to the invariant distribution is not guaranteed even when it is unique. Therefore, developing algorithms that ensure convergence to an invariant distribution in nonlinear Markov chains without an aggregator, i.e., settings beyond the scope of the bisection method introduced in Section~\ref{sec:compute}, remains an important direction for enabling practical computation in these models.


\section{Appendix}

\subsection{Proofs of Theorem \ref{Theorem: unique} and Propositions \ref{Prop:existence}, \ref{Prop:existence2}, \ref{prop:compute},   \ref{Prop:local}}
We will use the following Proposition to prove Theorem \ref{Theorem: unique} (see Corollary 2.5.2 in \cite{topkis2011supermodularity}). 

\begin{proposition}
\label{Topkis Fixed point}Suppose that $Z$ is a non-empty complete lattice, $E$ is a partially ordered set, and $f$ is an increasing function from $Z \times E$ into $Z$. Then, for each $e \in E$, the greatest and least fixed points 
of $f$ exist and are increasing in $e$ on $E$. 
\end{proposition}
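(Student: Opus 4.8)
The plan is to prove the proposition by combining Tarski's fixed-point theorem, applied fiberwise to the self-maps $f(\cdot,e)$, with an elementary set-inclusion argument for the monotone dependence on $e$. Throughout I would use the explicit order-theoretic description of the extremal fixed points (as a supremum/infimum of a comparison set) rather than any iterative construction, since $Z$ is not assumed to satisfy a chain condition and $f$ is not assumed continuous.

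First, fix $e \in E$ and consider the increasing self-map $f(\cdot,e):Z\to Z$. Set
\[
A_e=\{x\in Z: x\le f(x,e)\},\qquad B_e=\{x\in Z: f(x,e)\le x\}.
\]
Since $Z$ is a complete lattice it has a least element $\bot=\inf Z$, and $\bot\le f(\bot,e)$, so $A_e\neq\emptyset$; likewise $B_e$ contains $\top=\sup Z$. Put $\bar x(e)=\sup A_e$, which exists by completeness. For every $x\in A_e$ we have $x\le\bar x(e)$, hence $x\le f(x,e)\le f(\bar x(e),e)$ by monotonicity; thus $f(\bar x(e),e)$ is an upper bound of $A_e$ and therefore $\bar x(e)\le f(\bar x(e),e)$, i.e.\ $\bar x(e)\in A_e$. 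Applying monotonicity again gives $f(\bar x(e),e)\le f(f(\bar x(e),e),e)$, so $f(\bar x(e),e)\in A_e$ and hence $f(\bar x(e),e)\le\bar x(e)$; combining the two inequalities yields $f(\bar x(e),e)=\bar x(e)$. Any fixed point $x$ of $f(\cdot,e)$ lies in $A_e$, so $x\le\bar x(e)$, which shows $\bar x(e)$ is the greatest fixed point. The dual argument, with $\underline x(e)=\inf B_e$ and the roles of the two squeeze inequalities reversed, shows $\underline x(e)$ is the least fixed point.

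It then remains to prove the comparative statics. Let $e_1\le e_2$ in $E$. If $x\in A_{e_1}$ then $x\le f(x,e_1)\le f(x,e_2)$, since $(x,e_1)\le(x,e_2)$ and $f$ is increasing; hence $A_{e_1}\subseteq A_{e_2}$ and $\bar x(e_1)=\sup A_{e_1}\le\sup A_{e_2}=\bar x(e_2)$. Dually, if $x\in B_{e_2}$ then $f(x,e_1)\le f(x,e_2)\le x$, so $x\in B_{e_1}$; hence $B_{e_2}\subseteq B_{e_1}$ and $\underline x(e_1)=\inf B_{e_1}\le\inf B_{e_2}=\underline x(e_2)$. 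This gives that both extremal fixed points are increasing in $e$ on $E$.

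The only delicate point in this plan is the two-sided squeeze in the second paragraph showing that $\sup A_e$ is a genuine fixed point and not merely an element of $A_e$; it uses monotonicity of $f(\cdot,e)$ twice together with the completeness of $Z$ to guarantee that $\sup A_e$ exists. Everything else is bookkeeping. Note that $E$ need not be a lattice, nor need the supremum/infimum in $E$ play any role: only the monotonicity of $e\mapsto f(x,e)$ for each fixed $x$ enters, through the inclusions $A_{e_1}\subseteq A_{e_2}$ and $B_{e_2}\subseteq B_{e_1}$.
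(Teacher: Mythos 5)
Your proof is correct. Note, however, that the paper does not prove this proposition at all: it is imported as a known result, with a pointer to Corollary 2.5.2 of Topkis (2011), and is then used as a black box in the proof of Theorem \ref{Theorem: unique}. What you have written is a correct, self-contained derivation of that cited result: the fiberwise Knaster--Tarski argument (with $\bar x(e)=\sup A_e$ and $\underline x(e)=\inf B_e$) establishes existence of the extremal fixed points of $f(\cdot,e)$, and the inclusions $A_{e_1}\subseteq A_{e_2}$ and $B_{e_2}\subseteq B_{e_1}$ deliver the monotone comparative statics; this is essentially the same route Topkis takes (Tarski's theorem plus monotonicity of the comparison sets in the parameter). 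Your handling of the two delicate points is sound: non-emptiness of $A_e$ and $B_e$ follows from the existence of $\inf Z$ and $\sup Z$ (available because $Z$ itself is a non-empty subset of a complete lattice as defined in the paper), and the two-sided squeeze correctly upgrades $\bar x(e)\in A_e$ to $f(\bar x(e),e)=\bar x(e)$ without any continuity or chain-condition assumption. You also correctly isolate that only monotonicity of $e\mapsto f(x,e)$, not any lattice structure on $E$, is needed for the comparative statics, which is exactly the level of generality the paper's application requires (there $E=\mathcal{H}\subseteq\mathbb{R}$ with a reversed order).
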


\begin{proof}[Proof of Theorem \ref{Theorem: unique}]
Let $\theta _{1} ,\theta _{2} \in \mathcal{P} (S)$ and assume that $\theta _{1}  \succeq  _{D}\theta _{2}$. 
Let $\mu_{1} ,\mu_{2}$ be two invariant distributions of $Q$. Assume without loss of generality that $h_{2} := H( \mu_{2}) \geq H( \mu_{1}) :=h_{1}$ and let $f:S \rightarrow \mathbb{R}$ be a function such that $f \in D$. We have 
\begin{align*}\int _{S}  f(x) M_{h_{2}} \theta _{2} (dx)  &  =\int _{S} \int_{S} f (y) Q(x ,h_{2} ,dy)   \theta _{2} (d x) \\
 &  \leq  \int _{S} \int_{S} f (y) Q(x ,h_{1},dy)   \theta _{2} (d x) \\
 &  \leq \int _{S} \int_{S} f (y) Q(x ,h_{1} ,dy)   \theta _{1} (d x) \\
 & = \int _{S}  f(x) M_{h_{1}} \theta _{1} (dx). 
 \end{align*}

 Thus, $M_{h_{1}} \theta _{1}  \succeq  _{D}M_{h_{2}} \theta _{2}$. The first inequality follows from the fact that $Q$ is $D$-decreasing. The second inequality follows from the facts that $\theta _{1}  \succeq  _{D}\theta _{2}$ and $Q$ is $D$-preserving. 
We conclude that $M_{h_{1}}^{n} \theta _{1}  \succeq  _{D}M_{h_{2}}^{n} \theta _{2}$ for all $n \in \mathbb{N}$. 

Assume that condition (i) of the theorem holds. 
The fact that $Q$ satisfies Property (C) implies that $M_{h_{i}}^{n} \theta _{i}$ converges weakly to the unique fixed point of $M_{h_{i}}$ which is given by $\mu _{h_{i}}$ for $i=1,2$. Because $\mu_{1}$ and $\mu_{2}$ are invariant distributions of $Q$ we have  $\mu _{h_{i}} =\mu_{i}$ for $i=1,2$. Because $ \succeq  _{D}$ is closed with respect to weak convergence,  we have $\mu_{1} \succeq  _{D} \mu_{2}$. Using the fact that $H$ is increasing with respect to $\succeq  _{D} $ implies $h_{1} \geq h_{2}$. 

We conclude that if $\mu_{1}$ and $\mu_{2}$ are invariant distributions of $Q$ then $H(\mu_{1}) = H(\mu_{2})$. Thus, $Q (x ,H(\mu_{1}) ,B) =Q (x,  H(\mu_{2}) ,B)$ for all $x \in S$ and $B \in \mathcal{B} (S)$. Because $Q$ satisfies assumption (U) the operators $M_{H(\mu_{1})}$ and $M_{H(\mu_{2})}$ have unique fixed points. Thus, $\mu _{H(\mu_{1})} = \mu _{H(\mu_{2})}$, i.e., $\mu_{1} = \mu_{2}$. 
We conclude that if an invariant distribution of $Q$ exists, it is unique. 

Now assume that condition (ii) of the theorem holds. Define the order \(\geq'\) as the reverse of the usual order $\geq$: for \(x, y\), we write \(x \geq' y\) if and only if \(y \geq x\).  Under this assumption, the arguments above imply that 
the operator $M$ is increasing from $\mathcal{P}(S) \times \mathcal{H}$ to $\mathcal{P}(S)$ on the complete lattice $( \mathcal{P}(S),\succeq  _{D})$ when $\mathcal{H}$ is endowed with $\geq '$. 
Then by applying Proposition \ref{Topkis Fixed point} to the increasing operator $M$  we have $\mu _{h_{1}} \succeq _{D} \mu _{h_{2}}$, i.e., $\mu_{1} \succeq  _{D} \mu_{2}$. Now we can use the same arguments as the arguments for the case that condition (i) holds to show that if an invariant distribution of $Q$ exists, it is unique. 
\end{proof}

In order to establish the existence of an invariant distribution we will use Schauder-Tychonoff's following  fixed-point theorem (see Corollary 17.56 in \cite{aliprantis2006infinite}).

\begin{proposition}
(Schauder-Tychonoff) Let $K$ be a non-empty, compact, convex subset of a locally convex Hausdorff space, and let $f :K \rightarrow K$ be a continuous function. Then the set of fixed points of $f$ is compact and non-empty. 
\end{proposition}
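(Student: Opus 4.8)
The plan is to prove the two claims separately: that the fixed-point set $\mathrm{Fix}(f):=\{x\in K:f(x)=x\}$ is compact, which is soft, and that it is nonempty, which is the heart of the matter. For compactness I would note that, since the ambient space is Hausdorff, the diagonal of $K\times K$ is closed; the map $x\mapsto(x,f(x))$ is continuous, so $\mathrm{Fix}(f)$, being the preimage of that diagonal, is closed in $K$, and a closed subset of the compact set $K$ is compact. This is the only place the Hausdorff hypothesis enters.

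For nonemptiness the strategy is a finite-dimensional reduction culminating in Brouwer's fixed-point theorem, taken as a known input. Let $X\supseteq K$ be the locally convex space and let $\mathcal U$ be a base of open convex symmetric neighbourhoods of $0$. Fixing $U\in\mathcal U$, compactness of $K$ gives $x_1,\dots,x_n\in K$ with $K\subseteq\bigcup_i(x_i+U)$; writing $q$ for the Minkowski gauge of $U$ (so $q(z)<1$ iff $z\in U$), I would form the Schauder projection
$$P_U(x)=\Big(\textstyle\sum_i\lambda_i(x)\Big)^{-1}\sum_i\lambda_i(x)\,x_i,\qquad\lambda_i(x)=\max\{0,\,1-q(x-x_i)\},$$
which is continuous and well defined since for each $x$ some $\lambda_i(x)>0$. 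Only indices with $x-x_i\in U$ contribute, so $P_U(x)-x$ is a convex combination of elements of $U$ and hence lies in $U$. Since $C_U:=\mathrm{conv}\{x_1,\dots,x_n\}\subseteq K$ by convexity of $K$, the map $P_U\circ f:C_U\to C_U$ is a continuous self-map of a compact convex subset of a finite-dimensional space, so Brouwer yields $y_U\in C_U$ with $P_U(f(y_U))=y_U$; consequently $f(y_U)-y_U=f(y_U)-P_U(f(y_U))\in U$.

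The last step is to let $U$ shrink. Directing $\mathcal U$ by reverse inclusion, $(y_U)_{U\in\mathcal U}$ is a net in the compact set $K$ and so has a subnet converging to some $y^\ast\in K$. To conclude $f(y^\ast)=y^\ast$, I would fix $V\in\mathcal U$, choose $W\in\mathcal U$ with $W+W+W\subseteq V$, and use continuity of $f$ at $y^\ast$ to get $W'\subseteq W$ in $\mathcal U$ with $f(y)-f(y^\ast)\in W$ whenever $y\in K$ and $y-y^\ast\in W'$; along the tail of the subnet where the index $U$ satisfies $U\subseteq W'$ and the value satisfies $y_U-y^\ast\in W'$, the identity $f(y^\ast)-y^\ast=(f(y^\ast)-f(y_U))+(f(y_U)-y_U)+(y_U-y^\ast)$ forces $f(y^\ast)-y^\ast\in W+W+W\subseteq V$. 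As $V\in\mathcal U$ is arbitrary and $X$ is Hausdorff, $\bigcap_{V\in\mathcal U}V=\{0\}$, giving $f(y^\ast)=y^\ast$ and hence $\mathrm{Fix}(f)\neq\varnothing$.

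The hard part will be the final bookkeeping: verifying the Schauder-projection estimate $P_U(x)-x\in U$, and then combining $f(y_U)-y_U\in U$ with continuity of $f$ and a subnet limit, all expressed through the neighbourhood filter rather than a single metric --- the estimates are of the "sum of three small neighbourhoods'' type and require the directedness (and the symmetry and convexity) of $\mathcal U$ to be used carefully. Everything else is routine modulo Brouwer's theorem; in the Banach-space special case this whole argument is Schauder's theorem, with a single norm in place of the net of neighbourhoods.
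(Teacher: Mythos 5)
Your proof is correct. Note, though, that the paper does not prove this proposition at all: it is imported verbatim as Corollary 17.56 of Aliprantis and Border (2006) and used only as a black box in the proof of the existence result (Proposition \ref{Prop:existence}). What you have written is essentially the classical textbook argument that the citation stands in for: the finite-dimensional reduction via Schauder projections built from a finite subcover and the Minkowski gauge, Brouwer's theorem on the finite-dimensional convex pieces $C_U$, and a subnet limit over the neighbourhood base to pass from approximate to exact fixed points, with the Hausdorff hypothesis entering exactly twice (closedness of the diagonal for compactness of the fixed-point set, and $\bigcap_{V\in\mathcal U}V=\{0\}$ to force $f(y^{\ast})=y^{\ast}$). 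The details you flag as delicate are handled correctly: $U=\{q<1\}$ uses openness of $U$, the denominator $\sum_i\lambda_i(x)$ is positive on $K$ because the $x_i+U$ cover $K$, $P_U(x)-x\in U$ uses convexity and symmetry of $U$, and the three-neighbourhood estimate uses directedness of $\mathcal U$ together with cofinality of the subnet. So your proposal buys a self-contained proof of the tool the paper simply cites; relative to the paper there is nothing to reconcile, since the paper's ``proof'' is the reference itself.
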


\begin{proof}[Proof of Proposition \ref{Prop:existence}]
  Because $S$ is a compact Polish space  $\mathcal{P} (S)$ is a compact Polish space under the weak topology  (see Theorem 15.11 in \cite{aliprantis2006infinite}).
Clearly $\mathcal{P} (S)$ is convex. $\mathcal{P} (S)$ endowed with the weak topology is a locally convex Hausdorff space. Thus, if $T $ is continuous, we can apply Schauder-Tychonoff's fixed point theorem to conclude that $T $ has a fixed point. 

To show that $T$ is continuous, take a sequence of measures $\{ \mu_{n} \}$ and assume that it converges weakly to $\mu$.  

Let $f :S \rightarrow \mathbb{R}$ be a continuous and bounded function. Because $Q$ and $H$ are continuous we have $\lim _{n \rightarrow \infty} \int_{S} f(y) Q(x_{n},H(\mu_{n}),dy) = \int_{S} f(y) Q(x,H(\mu),dy) $ whenever $x_{n} \rightarrow x$. Define $m_{n}(x) : = \int_{S} f(y) Q(x,H(\mu_{n}),dy) $. Then $m_{n}(x)$ is a uniformly bounded sequence of functions such that $m_{n}(x_{n}) \rightarrow m(x)$ whenever $x_{n} \rightarrow x$. Thus, by Lebesgue's Convergence Theorem for varying measures (see Theorem 3.5 in \cite{serfozo1982convergence} and Section 5 in \cite{feinberg2020fatou}) we have $\lim _{n \rightarrow \infty} \int m_{n}(x) \mu_{n}(dx) = \int m(x) \mu (dx)$. Hence,
\begin{align*}\underset{n \rightarrow \infty }{\lim }\int _{S}f (x) T  \mu_{n} (d x) &  =\underset{n \rightarrow \infty }{\lim }\int _{S} \int_{S} f(y) Q(x,H(\mu_{n}),dy) \mu_{n} (d x) \\
 &  =\int _{S} \int_{S} f(y) Q(x,H(\mu),dy) \mu (d x) \\
 &  =\int _{S}f (x) T \mu(dx) .\end{align*}
 Thus, $T\mu_{n}$ converges weakly to $T \mu$. We conclude that $T$ is continuous. Thus, by the Schauder-Tychonoff's fixed point theorem, $T $ has a fixed point. 
\end{proof}

 \begin{proof} [Proof of Proposition \ref{Prop:existence2}]
 Consider the function $f(h)= h- H(\mu_{h})$ from $[h',h'']$ to $\mathbb{R}$ which is well defined because $\mu_{h} \in \mathcal{P}(S)$ for all $h \in [h',h'']$. 

We first claim that a root of $f$, say $h^{*}$, corresponds to an invariant distribution $\mu_{h^{*}}$ of $Q$. To see this, let $h^{*}$ be a root of $f$, that is, $H(\mu_{h^{*}} ) = h^{*}$.

From Property (U), $\mu_{h^{*}}$ is the unique probability measure that satisfies
 $$\mu_{h^{*}}(B) = \int Q(x, h^{*}, B)  \mu_{h^{*}}(dx), $$
so $H(\mu_{h^{*}} ) = h^{*}$ implies that 
 $$\mu_{h^{*}}(B) = \int Q(x, H(\mu_{h^{*}} ), B)  \mu_{h^{*}}(dx), $$
i.e., $\mu_{h^{*}}$ is an invariant distribution of $Q$.

If $h'' = H(\mu_{h''})$ or $h' = H(\mu_{h'})$ then $f$ has a root, and hence, $Q$ has an  invariant distribution. If 
 $h'' > H(\mu_{h''})$ and $h' < H(\mu_{h'})$, we have $f(h'') > 0 > f(h')$ so if $f$ is continuous we can apply the intermediate value theorem to prove that $f$ has a root, that is, $Q$ has an invariant distribution. 

We will now show that $f$ is continuous to conclude the proof. 

 Consider a sequence $\{h_{n}\}$, $h_{n} \in [h',h'']$ such that  $h_{n}$ converges to $h$ and let $\{\mu_{h_{k}}\}$ be a subsequence of $\{ \mu_{h_{n}} \}$ that converges to $\lambda$.  From Lebesgue's Convergence Theorem for varying measures (see Theorem 3.5 in \cite{serfozo1982convergence}) and using the same logic as in the proof of Proposition \ref{Prop:existence}, for every continuous and bounded function $m:S \rightarrow \mathbb{R}$, we have
 \begin{align*}\underset{k \rightarrow \infty }{\lim }\int _{S}m (x) \mu_{h_{k}} (d x) &  =\underset{k \rightarrow \infty }{\lim }\int _{S} \int_{S} m(y) Q(x,h_{k},dy) \mu_{h_{k}} (d x) \\
 &  =\int _{S} \int_{S} m(y) Q(x,h,dy) \lambda (d x) \\
 &  =\int _{S}m (x) M_{h} \lambda(dx) .\end{align*}
Because $\{ \mu_{h_{k}} \}$ converges to $\lambda$ we also have
$$ \lim _{k \rightarrow \infty} \int_{S} m(x) \mu_{h_{k}}  (dx) = \int_{S} m(x) \lambda  (dx).$$
Thus, $\lambda =M_{h}\lambda$. From assumption (U), $\mu_{h} $ is the unique fixed point of $M_{h}$, and thus,  $\lambda =\mu_{h} $. 

We conclude that any subsequence of $\{ \mu_{h_{n}} \}$ that converges weakly at all converges weakly to $\mu_{h}$. Furthermore, from assumption, the sequence $\{ \mu_{h_{n}} \}$ is a tight sequence of probability measures. Thus, $\{ \mu_{h_{n}} \}$ converges weakly to $\mu _{h}$ (see the Corollary after Theorem 25.10 in \cite{billingsley2008probability}). 

Because $H$ is continuous, we conclude that $f(h) = h - H(\mu_{h})$ is continuous on $[h',h'']$ which completes the proof. 
\end{proof}

\begin{proof} [Proof of Proposition \ref{prop:compute}]
From Proposition \ref{Prop:existence2} the function $f$ is continuous and has opposite signs at $h_{1}$ and $h_{2}$. Hence, the sequence $h_{n}$ defined in the statement of the proposition converges linearly to the root of $f$ (see for example, Theorem 2.1 in \cite{burden19852}).
   
   From Proposition \ref{Prop:existence2} if $h^{*}$ is a root of $f$, then $\mu_{h^{*}}$ is an invariant distribution of $Q$ which completes the proof. 
\end{proof}

\begin{proof}[Proof of Proposition \ref{Prop:local}]
The proof is similar to the proof of Theorem \ref{Theorem: unique}. We provide it here for completeness. Let $\theta _{1} ,\theta _{2} \in \mathcal{W}$ such that $\theta _{1}  \succeq  _{D}\theta _{2}$. 
Let $\mu_{1} ,\mu_{2} \in \mathcal{W}$ be two invariant distributions of $Q$. 

Assume without loss of generality that $h_{2} := H( \mu_{2}) \geq H( \mu_{1}) :=h_{1}$ so $h_{1},h_{2} \in \mathcal{H}_{\mathcal{W}}$ and let $f:S \rightarrow \mathbb{R}$ be a function such that $f \in D$. We have 
\begin{align*}\int _{S}  f(x) M_{h_{2}} \theta _{2} (dx)  &  =\int _{S} \int_{S} f (y) Q(x ,h_{2} ,dy)   \theta _{2} (d x) \\
 &  \leq  \int _{S} \int_{S} f (y) Q(x ,h_{1},dy)   \theta _{2} (d x) \\
 &  \leq \int _{S} \int_{S} f (y) Q(x ,h_{1} ,dy)   \theta _{1} (d x) \\
 & = \int _{S}  f(x) M_{h_{1}} \theta _{1} (dx). 
 \end{align*}

 Thus, $M_{h_{1}} \theta _{1}  \succeq  _{D}M_{h_{2}} \theta _{2}$. The first inequality follows from the fact that $Q$ is $D$-decreasing on $\mathcal{W}$. The second inequality follows from the facts that $\theta _{1}  \succeq  _{D}\theta _{2}$ and $Q$ is $D$-preserving on $\mathcal{W}$. Now because $\theta_{1},\theta_{2} \in \mathcal{W}$ and $h_{1},h_{2} \in \mathcal{H}_{\mathcal{W}}$, we have $M_{h_{1}} \theta _{1}, M_{h_{2}} \theta _{2} \in \mathcal{W}$. Applying the same argument as above again, we conclude that $M_{h_{1}}^{n} \theta _{1}  \succeq  _{D}M_{h_{2}}^{n} \theta _{2}$ for all $n \in \mathbb{N}$. 

Now the proof continues exactly as in the proof of Theorem \ref{Theorem: unique}.
\end{proof}

\subsection{Proof of Corollaries \ref{Corr:Queue}, \ref{Coro:nonlinear}, \ref{Coro:wealth}} \label{Sec:CorrProofs}

\begin{proof} [Proof of Corollary \ref{Corr:Queue}]
  Let $H(\mu) =  \int _{\mathbb{R} _{+}} x \mu(dx) $, $Law(S_{t}) = Law (S)$ and $Law(T_{t} (h) ) = Law (T(h) )$. Let $D$ be the set of increasing functions, so $\succeq_{D}$ is equivalent to the first order stochastic dominance order $\succeq_{SD}$ and $H$ is increasing with respect to $\succeq_{D}$. From  Theorem 19.3.5 in \cite{meyn2012markov}, Property (C) is satisfied because $\mathbb{E}(T(h)) \geq \mathbb{E} (T(0)) > \mathbb{E} (S)$ for all $h  \geq 0$.
  
Let $f: \mathbb{R} \rightarrow \mathbb{R}$ be increasing. Because $T$ is stochastically increasing in $h$ the function 
$$ \int f(y)Q(x, h, dy) = \mathbb{E} f \left ( \max \{x + S - T(h) , 0 \} \right ) $$ 
is increasing in $x$ and decreasing in $h$ where the expectation is taken with respect to the random variables $S$ and $T(h)$. Thus, $Q$ is $D$-preserving and $D$-decreasing.
   Hence, from Theorem \ref{Theorem: unique} we conclude that the nonlinear Markov chain given in Equation (\ref{Eq:queue}) has at most one invariant distribution.

  For existence, first note that the function $H(\mu_{h})$ is bounded from below by $0$ so $h' \leq H(\mu_{h'})$ for $h'=0$. In addition, from the proof of Theorem \ref{Theorem: unique} the function $H(\mu_{h})$ is decreasing in $h$ so $H(\mu_{h}) \leq H(\mu_{0}) < \infty$ as $\mathbb{E} (T(0)) > \mathbb{E} (S)$. Hence, we can find $h'' \geq H(\mu_{h''})$ for some $h'' > 0$. 
  
  We already established that property (C) holds, and hence, property (U) holds too. 
  Further, it is immediate to verify that $H$ and $Q$ are continuous. 
  
 Finally, for any sequence of non-negative numbers $h_{n}$ that converges to some $h$, the assumptions that $ \mathbb{E} (T(0)) > \mathbb{E} (S)$ and that $T(h)$ and $S$ have bounded variances, guarantee that the sequence of invariant distributions of the G/G/1 queue $\mu_{h_{n}}$ has bounded first two moments, and hence, it is tight. Thus, we can apply Proposition \ref{Prop:existence2} to conclude that an invariant distribution exists which completes the proof.  
\end{proof}

\begin{proof} [Proof of Corollary \ref{Coro:nonlinear}]
Existence follows 
immediately
 from Proposition \ref{Prop:existence}.  For uniqueness,  we need to show that the conditions of Theorem \ref{Theorem: unique} holds. We let $S=\{1,\ldots,n\}$ with the standard order, $H(\mu)=G \left (\mu(\{1\}),\ldots,\mu ( \{n \}) \right )$, and $D$ to be the set of increasing functions, so $\succeq_{D}$ is equivalent to $\succeq_{SD}$ and $(\mathcal{P}(S),\succeq_{D})$ is a complete lattice. Note that $H$ is increasing with respect to $\succeq_{SD}$ because $\mu \succeq_{SD} \mu' $ holds if and only if $\left (\mu(\{1\}),\ldots,\mu ( \{n \}) \right ) \geq_{m}\left (\mu'(\{1\}),\ldots,\mu ' ( \{n \}) \right ) $ and from the assumption that $G$ is increasing with respect to $\geq_{m}$. 
    
     Condition (1) implies that $Q$ is $D$-preserving, Condition (2) implies that $Q$ is $D$-decreasing, and Condition (3) implies that Property (U) holds.  Thus, we can apply Theorem \ref{Theorem: unique} to prove that $Q$ has at most one invariant distribution.  

    We can identify $Q$ with the stochastic matrix $P$ by $P_{ij} (\cdot) = Q(i,\cdot,\{ j \})$, and hence, using the definition of the invariant distribution, the Corollary follows from Theorem \ref{Theorem: unique}. 
\end{proof}

\begin{proof} [Proof of Corollary \ref{Coro:wealth}]
    For existence, continuity of $H$ and $Q$ follows immediately from the assumptions. Now note that the state space is bounded because the random variables $R_{i}$, $Y$, and the policy functions $g_{i}$ are bounded. In particular, we let the state space be the compact set $S=[0,nM\overline{r} + \overline{y}]$. Hence, we can use Proposition \ref{Prop:existence}  to conclude that $Q$ has an invariant distribution.
      
     For uniqueness, we need to show that the conditions of Theorem \ref{Theorem: unique} hold. We let $D$ to be the set of increasing functions, so $\succeq_{D}$ is equivalent to $\succeq_{SD}$. 
    
   It is immediate that Condition (2) implies that $Q$ is $D$-preserving and Conditions (2) and (3) imply that $Q$ is $D$-decreasing. Thus, we can apply Theorem \ref{Theorem: unique} to prove that $Q$ has at most one invariant distribution.
\end{proof}

\subsection{Proof of Claims 1,2,3,4,5}

\begin{proof}
    [Proof of Claim \ref{claim1}]
We let $D$ to be the set of all  increasing functions. Clearly $H$ is increasing with respect to $\succeq_{D}$ because $m$ is increasing. Property (C) holds for AR(1) process with $a \in (0,1)$, (see, for example, \cite{light2024note}).   Let $f: \mathbb{R} \rightarrow \mathbb{R}$ be increasing.  Then 
$$ \int f(y)Q(x, h, dy) = \int f(ax -  h + \epsilon ) \phi (d(\epsilon)) $$ 
is increasing in $x$ and decreasing in $h$ so $Q$ is $D$-preserving and $D$-decreasing. Hence, we can apply Theorem \ref{Theorem: unique} to conclude that $Q$ has at most one invariant distribution.

For existence, note that the quadratic growth condition and the fact that the variance of $\epsilon_{t}$ is finite imply that $H(\mu_{h})$ is finite for every $h$. Furthermore, if $h_{n}$ converges to $h$, then it follows that the sequence $\mu_{h_{n}}(dx)$ of invariant distributions of the AR(1) process given the parameter $h_{n}$ has bounded first two moments, and hence,  $\{\mu_{h_{n}} \}$ is a tight sequence of probability measures. In addition it is immediate that $Q$ is continuous and $H$ is continuous on $\{\mu_{h} : h\in [h',h''] \}$ as $\mu_{h}$ has bounded first two moments and $m$ is continuous with a quadratic bound. 

From the proof of Theorem \ref{Theorem: unique} we have that $H(\mu_{h})$ is decreasing in $h$. This implies that  we can find  $h',h'' \in \mathbb{R}$, $h''>h'$, such that $h'' \geq H(\mu_{h''})$ and $h' \leq H(\mu_{h'})$ (e.g., by letting $h' = -|c|-1$ and $h'' = |c|+1 $ where $H(\mu_{0}) = c$).

Thus, from Proposition \ref{Prop:existence2} existence follows. 
\end{proof}

\begin{proof}
    [Proof of Claim \ref{claim2}]
We let $D$ to be the set of all the functions that are increasing in the first argument. Clearly $H$ is increasing with respect to $\succeq_{D}$. We need to show that  $Q$ is $D$-preserving and $D$-decreasing in order to use Theorem \ref{Theorem: unique}. 
Let $f \in \mathbb{R}^{\mathbb{R}^{2} }$ be increasing in the first argument.  Then 
$$ \int f(y_{1},y_{2})Q((x_{1},x_{2}),h, dy) = \int f(ax_{1} -  h + \epsilon_{1},k(x_{2}) + \epsilon_{2}) \phi (d(\epsilon_{1},\epsilon_{2})) $$ 
is increasing in the first argument and decreasing in $h$ so $Q$ is $D$-preserving and $D$-decreasing. Hence, we can apply Theorem \ref{Theorem: unique} to conclude that $Q$ has at most one invariant distribution. Existence of an invariant distribution follows by the same argument as in Claim \ref{claim1}.
\end{proof}

\begin{proof}
    [Proof of Claim \ref{claim3}]
Consider the set of functions $D$ such that $f: \mathbb{R}^{n} \rightarrow \mathbb{R}$ is in $D$ if $f(x) = \sum _{i=1}^{n} y_{i}x_{i} + c$ for some $y \in O$ and $c \in \mathbb{R}$. Property (C) holds (see Example 1 in \cite{light2024note}).  It is immediate that $H$ is increasing with respect to $\succeq _{D}$. 
   
   We now show that $Q$ is $D$-preserving and $D$-decreasing. Let $f \in D$ so $f(x) = \sum _{i=1}^{n} y_{i}x_{i} + b$ for some $y \in O$.
   
   We have 
 \begin{align*}
     v(x):=\int f(x')Q(x,h,dx') & = \int f(a_{1}x_{1} - \beta_{1}h+\epsilon_{1},\ldots,a_{n}x_{n}-\beta_{n}h + \epsilon_{n}) \phi (d \epsilon ) \\
     & = \int \sum _{i=1}^{n} y_{i}(a_{i}x_{i} - \beta_{i}h + \epsilon_{i})\phi (d \epsilon ) + b \\
     & = \sum _{i=1}^{n} y_{i}'x_{i} + b'
      \end{align*}
with $y_{i}' = a_{i}y_{i}$ and $b' = \int \sum _{i=1}^{n} y_{i} ( -\beta_{i}h + \epsilon_{i})\phi (d \epsilon ) + b$. Note that $y' $ is in $O$ as $y \in O$ and $a_{i} \geq 0$ for all $i$. Hence, $v$ is in $D$ which means that $Q$ is $D$-preserving.

To show that $Q$ is $D$-decreasing  let $h_{2} \geq h_{1}$ and note that 
\begin{align*}
\int f(x')Q(x,h_{2},dx') & =  \int \sum _{i=1}^{n} y_{i}(a_{i}x_{i} - \beta_{i}h_{2} + \epsilon_{i})\phi (d \epsilon ) + b \\
     & \leq  \int \sum _{i=1}^{n} y_{i}(a_{i}x_{i} - \beta_{i}h_{1} + \epsilon_{i})\phi (d \epsilon ) + b  \\
     & = \int f(x')Q(x,h_{1},dx')
\end{align*}
where the inequality follows from the fact that $y$ and $\beta$ are in $O$ so $y_{i}\beta_{i} \geq 0$ for all $i$. Thus, $Q$ is $D$-decreasing.

To prove existence, note that we can find $H(\mu_{h})$ directly. A simple calculation shows that $H(\mu_{h}) = \sum _{i=1}^{n} \gamma_{i}(-h+e_{i})/(1-a_{i})$ where $e_{i}$ is the expected value of $\epsilon_{i}$. Thus, we can find  $h',h'' \in \mathbb{R}$, $h''>h'$, such that $h'' > H(\mu_{h''})$ and $h' < H(\mu_{h'})$. In addition, it is easy to see that the tightness condition of Proposition \ref{Prop:existence2} holds as the sequence $\{\mu_{h_{k}} \}$ has bounded first two moments whenever $h_{k}$ converges to some $h$.
\end{proof}

\begin{proof} [Proof of Claim \ref{claim:supermodular}]
    Fix the finite lattice \(S=\{0,1\}^{n}\) and let \(D\) be the set of all super‑modular functions on \(S\). 

Note that the nonlinear Markov kernel \(Q\) is defined for each \(y\in\{0,1\}^n\) by
\[
Q\bigl(x,h;\{y\}\bigr)
=\;\rho(x,h)\,\mathbf1_{\{y=(z,\dots,z)\}}\;p^z(1-p)^{1-z}
\;+\;(1-\rho(x,h))\;\prod_{i=1}^n p^{y_i}(1-p)^{1-y_i}.
\]

 Let \(f\in D\) and let $C_0=\mathbb{E}\bigl[f(Z_1,\ldots,Z_n)\bigr]$ and 
$C_1=\mathbb{E}\bigl[f(Z,\ldots,Z)\bigr].$  It can be shown that  \((Z,\ldots,Z)\) dominates \((Z_1,\ldots,Z_n)\) in super‑modular order (e.g., \cite{hu2005dependence}). Hence,  \(C_1\ge C_0\). 

For each \(x\in S\) and \(h\in[0,1]\),
we have 
$$ \sum _{y \in S} f(y) Q(x,h,\{y\})  
     =C_0+\bigl(C_1-C_0\bigr)\rho(x,h).
$$
which is supermodular in $x$ and decreasing in $h$ because the coefficient \(C_1-C_0\) is non‑negative and
\(\rho(\cdot,h)\) is super‑modular in $x$ and decreasing in $h$ by assumption. Hence
 \(Q\) is \(D\)-preserving and $D$-decreasing.

 To show that $H$ is increasing let \(\mu_2\succeq_{D}\mu_1\) and define 
$
f^*(y)=\mathbf{1}_{\{y=(1,\dots,1)\}}\,,
$
which is super‑modular on \(\{0,1\}^n\).  Then 
$$ H(\mu_{2}) = \mu_{2}\bigl(\{(1,\dots,1)\}\bigr) = \int f^*(y)\,\mu_{2}(dy) \geq  \int f^*\,d\mu_1 (dy)
=\mu _{1}\bigl(\{(1,\dots,1)\}\bigr) = H(\mu_{1}),
$$ 
and $H$ is increasing with respect to $\succeq_{D}$. 

In addition, for each \(h\) the linear Markov chain \(Q(\cdot,h,\cdot)\) assigns strictly positive
probability to every state of \(S\)
because \(0<p<1\) and \(1-\rho(x,h)>0\) 
Hence the chain is irreducible and aperiodic on a finite state space and Property (C) holds. 

Thus, from Theorem \ref{Theorem: unique} the nonlinear Markov chain $Q$ has at most one invariant distribution.  Existence readily follows from using the continuity of $\rho$ and applying Proposition \ref{Prop:existence}. This proves Claim \ref{claim:supermodular}.

Now assume that \(D_{\uparrow}\) is the set of all increasing functions on \(S=\{0,1\}^n\) and assume for simplicity $n=2$. Let 
$k(y) \;=\; y_1 + y_2 - y_1y_2,
$
which is increasing in each coordinate. We have 
\[
\sum _{y} k(y) Q(x,h, \{y \}) = Q(x,h,\{1,0\}) + Q(x,h,\{1,1\}) + Q(x,h,\{0,1\}) = 2p - p^2 \;-\;(p-p^2)\,\rho(x,h),
\]
which is not necessarily increasing. Hence, \(Q\) fails to be \(D_{\uparrow}\)\!--\,preserving. 
\end{proof}

\begin{proof}
    [Proof of Claim \ref{claim_queue}]
Let 
\begin{equation} \label{Eq:x} h = \frac{\mathbb{E}(S^{2})} {\sqrt{\mathbb{E}(S)^{2}+2\mathbb{E}(S^{2})} - \mathbb{E}(S)} \end{equation}

and consider the linear Markov chain $W_{t+1} = \max (0, W_{t} + S_{t} - T_{t}(h) )$. Then it has a unique invariant distribution if $\mathbb{E}T_{t}(h) = h > \mathbb{E}(S) $ (see Theorem 19.3.5 in \cite{meyn2012markov}) which holds because
$$ \mathbb{E}(S)\sqrt{\mathbb{E}(S)^{2}+2\mathbb{E}(S^{2})} =\sqrt{\mathbb{E}(S)^{4}+2\mathbb{E}(S^{2})\mathbb{E}(S)^{2}}  < \sqrt{\left ( \mathbb{E}(S)^{2} + \mathbb{E}(S^{2}) \right ) ^{2}} =  \mathbb{E}(S)^{2} + \mathbb{E}(S^{2})  $$
which implies that $h > \mathbb{E}(S)$. 
Let $W_{\infty}$ be the random variable with the law $\mu^{*}$ where $\mu^{*}$ is unique invariant distribution of  the linear Markov chain $(W_{t})_{t \in \mathbb{N}}$.

From the Pollaczek-Khinchin formula (see Equation (8.1) in Chapter 8 in \cite{cooperintroduction}) the stationary expected waiting time  is given by $\mathbb{E}(W_{\infty }) = \lambda (h) \mathbb{E}(S^{2})/(2(1-\lambda(h)\mathbb{E}(S)))$. Using the fact that $\lambda (h) = 1/h$, and algebraic manipulations, we see that $h = \mathbb{E}(W_{\infty })$. Hence, $\mu^{*}$ is an invariant distribution of the nonlinear Markov chain given in Equation (\ref{Eq:queue}). Uniqueness follows from Corollary \ref{Corr:Queue}.  

For $M/M/1$ queue $S$ is an exponential random variable with a parameter $\mu$, so $\mathbb{E}(S) = 1/\mu$ and $\mathbb{E}(S^{2}) = 2/\mu^{2}$ and we get $$\mathbb{E} (W_{ \infty})  = \frac{ 2}  {(\sqrt{5}-1)\mu} = \frac{ 2\mathbb{E}(S)} {\sqrt{5}-1}$$
which completes the proof. 
\end{proof}

\subsection{Uniqueness via the Convex Stochastic Order} \label{Sec:convexOrder}

In this section, we expand Section \ref{sec:flexibility} by presenting an example where the convex stochastic order is used to establish the uniqueness of an invariant distribution and the standard first order stochastic dominance would not satisfy the required conditions for uniqueness. 

\begin{example}[Convex Stochastic Order]
\label{ex:convex_stochastic}
Suppose that the state space is $S = \mathbb{R}$ and 
$a\in(-1,1)$.  
Let $(\epsilon_t)_{t\ge1}$ be i.i.d. with mean zero, finite moments and law  $\phi$.   Let $\sigma(h)$ be a positive, continuous and decreasing function  
and consider 
the nonlinear Markov chain 
\begin{equation*}
  X_{t+1}  \;=\;a\,X_t\;+\;\sigma\!\bigl(H(\mu_t)\bigr)\,\epsilon_{t+1},
\end{equation*}
where $H(\mu) = \int m(x) \mu(dx) $ for some continuous and convex function $m$ on $\mathbb{R}$ such that $|m(x)| \leq C_{0} + C_{1}x^{k}$ for some $k \geq 1$ and constants $C_{0}$, $C_{1}$. Let $D$ be the set of all convex functions on $S$ so $\succeq_D$ is the convex stochastic order. 

Clearly $H$ is increasing with respect to 
$\succeq_D$.  Property (C) follows from standard arguments as in Claim \ref{claim1}. 

Let $f: \mathbb{R} \rightarrow \mathbb{R}$ be convex. Then the function
$$ v(x,h):= \int f(y)Q(x, h, dy) = \mathbb{E} f(ax +  \sigma(h)\epsilon)  $$ 
where the expectation is with respect to $\epsilon$, is convex in $x$ as a composition of a convex and a linear function. Hence,  $Q$ is $D$-preserving.

Now let $h_{2} \geq h_{1}$ and $Y = \sigma(h_{2}) \epsilon$, $c= \sigma (h_{1}) / \sigma(h_{2})$. Note that $c \geq 1$ because $\sigma$ is decreasing. 

In addition, from Jensen's inequality we have
$$ \mathbb{E} f(ax + Y) \leq  \frac{1}{c} \mathbb{E} f(ax+cY) + \left ( 1 - \frac{1}{c} \right )f(ax). $$
Hence, using Jensen's inequality again we have  
$$\mathbb{E} f(ax+Y)= c\mathbb{E} f(ax + Y) -  \left ( c - 1 \right ) \mathbb{E} f(ax + Y) \leq  c \mathbb{E} f(ax + Y) -  \left ( c - 1 \right )f(ax)  \leq   \mathbb{E} f(ax+cY) . $$
That is, 
$$v(x,h_{2}) = \mathbb{E} f(ax+\sigma (h_{2})\epsilon) \leq \mathbb{E} f(ax+\sigma (h_{1})\epsilon) = v(x,h_{1})$$
so $D$-decreasing. Hence, we can apply Theorem \ref{Theorem: unique} to conclude that $Q$ has at most one invariant distribution.

The proof of existence follows from similar arguments to the proof in Claim \ref{claim1} so it is omitted. We conclude that $Q$ has a unique invariant distribution.

\end{example}

\subsection{On non-contraction and numerical examples} \label{sec:Non-contraction}

In this section we show that the nonlinear operator $T$ is generally not a contraction in our examples and applications.
 We focus on three  tractable cases: Example~\ref{example:flexible} (a one–dimensional nonlinear autoregression), the strategic M/M/1 queue  (Section~\ref{sec:queueing}), and the nonlinear fixed‑point equations studied in Section~\ref{Sec:nonlinear}.  In each instance, we show that the associated dynamics are generally not contractive even in simple parameter configurations.  We note that the other two applications (inventory dynamics  and wealth distributions) have transitions that depend on policy functions that typically come from a dynamic programming problem and do not have closed‑form expressions so  establishing contraction there would be much more involved.  In Section \ref{sec:noncontraction:wealth} we derive such a policy function numerically in the wealth distribution application and compute the corresponding invariant wealth distribution using Algorithm \ref{alg:bisection_simplified}.

We first recall the definitions of the Wasserstein and total variation metrics between two probability measures 
$\mu$ and $\nu$ on a metric space $(\mathcal{S}, d)$ that is  a Polish space.

For $p \ge 1$, the \emph{Wasserstein distance of order $p$} is defined as
\begin{equation*}
    W_p(\mu, \nu) :=  \inf_{\pi \in \Pi(\mu, \nu)} \left( \int_{\mathcal{S} \times \mathcal{S}} d(x, y)^p \, d\pi(x, y) \right)^{1/p},
\end{equation*}
where $\Pi(\mu, \nu)$ denotes the set of all couplings of $\mu$ and $\nu$. If $\mu$ and $\nu$ are probability measures on $\mathbb{R}$ with cumulative distribution functions (CDFs) \( F_1 \) and \( F_2 \), then the Wasserstein distance of order \( p \ge 1 \) is given by:
\begin{equation} \label{Eq:Wasserstein_R}
    W_p(\mu, \nu) = \left( \int_0^1 \left| F_1^{-1}(q) - F_2^{-1}(q) \right|^p \, dq \right)^{1/p},
\end{equation}
where \( F_1^{-1} \) and \( F_2^{-1} \) are the quantile functions (inverse CDFs).

The \emph{total variation distance} between $\mu$ and $\nu$ is defined as
\begin{equation*}
    \|\mu - \nu\|_{TV} := \sup_{A \in \mathcal{B}(\mathcal{S})} |\mu(A) - \nu(A)|. 
\end{equation*}
If $\mu$ and $\nu$ admit densities $f$ and $g$ with respect to a common reference measure, then the total variation distance simplifies to
\begin{equation} \label{Eq:totalvariationPDF}
    \|\mu - \nu\|_{TV} = \frac{1}{2} \int_{\mathcal{S}} |f(x) - g(x)| \, dx.
\end{equation}

\subsubsection{Contraction in Example \ref{example:flexible}}

Consider the nonlinear Markov chain in Example \ref{example:flexible} given in Equation \ref{Eq:ex_1(i)} with $m(x) = \beta x$ for some $\beta > 0$. As a special case of Claim \ref{claim1}, this nonlinear Markov chain has a unique invariant distribution using the monotonicity arguments developed in this paper. We now show it is generally not a contraction in the Wasserstein distance and the total variation distance. We then show that  augmenting the nonlinear Markov chain with an additional variable to construct a linear Markov chain would not generally be a useful approach to prove uniqueness of an invariant distribution. 

\textbf{Wasserstein Distance.} 
Let's assume $ p \geq 1$ and the $p$th moment of $\epsilon$ exists. 

Consider two simple Dirac measures $ \mu_1 = \delta_h$ and $\mu_2 = \delta_{h+\delta}$ for some $h \in \mathbb{R}$ and $\delta > 0$. Then $W_{p}(\mu_{1},\mu_{2}) = \delta$. 

For $\mu_1 = \delta_h$, we have $x=h$ and $H(\mu_1)=\beta h$ so $T\mu_1 = \operatorname{Law}(\epsilon + (a-\beta)h)$. Similarly, for $\mu_2 = \delta_{h+\delta}$, we have $x=h+\delta$ and $H(\mu_2)=\beta (h+\delta)$ so  $T\mu_{2} = \operatorname{Law}(\epsilon + (a-\beta)h + (a-\beta)\delta)$.  Thus, $T\mu_2$ is a simple translation of $T\mu_1$ by a constant shift of $(a-\beta)\delta$. The Wasserstein distance between such laws $W_{p}(T\mu_{1},T\mu_{2})$ is $|a - \beta| \delta$. 

Hence, we conclude that a necessary condition for $T$ to be a contraction is $|a-\beta| < 1$ which is not generally the case (e.g., if $\beta \geq 2$).  On the other hand, a sufficient condition for contraction in \( W_1 \) is the relatively strong requirement \( |a| + |\beta| < 1 \).\footnote{Indeed, let $\pi$ be an optimal coupling of $\mu$ and $\nu$, so $(X,Y) \sim \pi$ satisfies
$
W_{1}(\mu,\nu) \;=\; \mathbb{E}_{\pi} \bigl[ |X - Y| \bigr].
$
Let $\epsilon \sim \phi$ be an independent noise term.  
Define the valid coupling of $T\mu$ and $T\nu$, 
$
(X',Y') \;=(aX - \beta \mathbb{E}[X] + \epsilon, a Y - \beta \mathbb{E}[Y] + \epsilon).
$
Therefore by using the triangle inequality and Jensen's inequality we have
\[
W_{1}(T\mu,T\nu) \leq \mathbb{E}_{\pi} \bigl[ |a(X-Y) - \beta \mathbb{E}_{\pi}[X-Y]| \bigr]
\;\le\;
|a|\,\mathbb{E}_{\pi} \bigl[ |X-Y| \bigr] + |\beta|\,|\mathbb{E}_{\pi}[X-Y]| \leq \bigl(|a| + |\beta|\bigr) \, \mathbb{E}_{\pi} \bigl[ |X-Y| \bigr].
\]
That is, 
\[
W_{1}(T\mu,T\nu) \;\le\; \bigl(|a| + |\beta|\bigr) \, W_{1}(\mu,\nu).
\]} 
 Intuitively, the nonlinear mapping $T$ amplifies differences in the mean when $\beta$ is large: a small change in the current distribution’s mean leads to an even larger shift in the next period's mean.  This is in addition to the standard auto regressive feedback captured by $a$. In more complex applications we consider next where the entire distribution determines the dynamics, contraction is typically far more difficult to satisfy. 
 
\textbf{Total Variation Distance.}
Let $\mu_1=\mathcal{N}(m_1,\sigma^2)$ and $\mu_2=\mathcal{N}(m_2,\sigma^2)$ where $\mathcal{N}(\mu_{i},\sigma^{2})$ is the normal random variable with mean $\mu_{i}$ and variance $\sigma ^{2}$ and probability density function $ \frac{1}{\sqrt{2\pi\sigma^2}} \exp\left( -\frac{(x - m_i)^2}{2\sigma^2} \right)$.

A simple calculation using Equation (\ref{Eq:totalvariationPDF}) shows that the total variation distance is given by
\begin{equation} \label{Eq:TotalVarNormal}
   d_{TV}(\mu_1,\mu_2)=
   2\Phi\!\Bigl(\frac{|m_2-m_1|}{2\sigma}\Bigr)-1,
\end{equation}
where $\Phi$ is the standard normal CDF. 

Consider two normal random variables $\mu=\mathcal{N}(0,1)$ and $\nu= \mathcal{N}(1,1)$ as an example and assume that $\epsilon = \mathcal{N}(0,1)$.  Then using Equation (\ref{Eq:TotalVarNormal}) we have   $d_{TV}(\mu,\nu)=2\Phi\!\Bigl(\tfrac12\Bigr)-1$.

Now note that  $T\mu=\operatorname{Law}\bigl(aX+\varepsilon\bigr)=\mathcal{N}\!\Bigl(0,\;a^{2}+1\Bigr)$ 
           where we used the fact that  the sum of two independent normally distributed random variables is normal and its mean is the sum of the two means, and its variance is the sum of the two variances. Similarly, $T\nu = \mathcal{N}(a-\beta , a^{2}+1)$.  

Using again Equation (\ref{Eq:TotalVarNormal}) we have  $d_{TV}(T\mu,T\nu)= 2 \Phi \!\Bigl(\tfrac {|a-\beta|} {2 \sqrt {a^{2} + 1}}\Bigr) - 1$. 

Hence, $d_{TV}(T\mu,T\nu) \geq d_{TV}(\mu,\nu)$ if $|a-\beta| \geq \sqrt {a^{2} + 1}$ and $T$ is generally not a contraction.

\textbf{Augmented Markov Chain.} Another possible approach is to augment the original nonlinear Markov chain to obtain a linear Markov process, thereby enabling the application of standard tools to establish existence and uniqueness of a stationary distribution \citep{meyn2012markov}. In the special case where the aggregator is given by \( m(x) = \beta x \), this augmentation can be achieved by introducing a single additional variable representing the mean, rather than augmenting with the entire distribution.

To do that we introduce the mean coordinate  as an additional variable: 
$m_t:=\mathbb{E}[X_t]$, and consider the augmented Markov chain $Z_t:=(X_t,m_t)\in\mathbb R^2 $. 
We start from any distribution of $X_0$ with finite first two moments and $m_0=\mathbb{E}[X_0]$. 

Taking expectations in the original nonlinear Markov chain given in Equation (\ref{Eq:ex_1(i)}) yields 
\begin{equation*}
  m_{t+1}=a\,m_t-\beta\,m_t+e=(a-\beta)m_t+e .
\end{equation*}
where $ e = \mathbb{E}(\epsilon_{t})$. 
Hence with 
\[
  A:=\begin{pmatrix}a&-\beta\\0&a-\beta\end{pmatrix},
  \quad
\xi_{t+1}:=\begin{pmatrix}\epsilon_{t+1}\\ e\end{pmatrix},
\]
the augmented Markov chain is an AR(1) process
\begin{equation}\label{eq:AR2}
  Z_{t+1}=A Z_t+\xi_{t+1},\qquad t\ge0 .
\end{equation}
on $\mathbb R^2$. 

Assume that $|a-\beta| > 1$.
Now it is easy to see that an invariant distribution for the augmented AR(1) process has to be of the form of the product measure $\mu^{*} \otimes\delta_{m^\star}$  where $  m^\star =\dfrac{e}{1-(a-\beta)}$. In addition, $\mu^{*} \otimes\delta_{m^\star}$ is an invariant distribution for the augmented AR(1) process if and only if $\mu^{*}$ is an invariant distribution of the original nonlinear Markov chain with $\mathbb{E}(X_{\infty}) = m^{*}$.  

However, when $|a - \beta| > 1$, the mean component $m_t$ diverges for almost every initial condition $m_0$. Consequently, classical techniques from the theory of linear Markov chains, such as drift conditions \cite{meyn2012markov}, that imply global stability do not apply.

\begin{remark}
We showed that the operator $T$ for the nonlinear Markov chain in Example~\ref{example:flexible}, given in Equation~\ref{Eq:ex_1(i)} with $m(x)=\beta x$ and studied above, is Lipschitz with respect to $W_{1}$. One may therefore wonder whether there is any necessary connection between Lipschitz continuity of $T$ with respect to the Wasserstein distance $W_p$ and the existence and uniqueness results established via our monotonicity approach. We now demonstrate that the answer is generally no. We present a cubic aggregator for which the associated nonlinear Markov operator $T$ is not globally Lipschitz with respect to $W_p$ for any $p\ge1$, yet it still possesses a unique invariant distribution, a result that can be established directly using our uniqueness theorem.

Let $S=\mathbb{R}$ be endowed with the usual order. Fix $a\in(0,1)$ and $\beta>0$, and let
$(\varepsilon_t)_{t\ge1}$ be i.i.d.\ with
$
\mathbb{E}[\varepsilon_1]=0$, $
\mathbb{E}[\varepsilon_1^2]=\sigma^2\in(0,\infty)$, $
\mathbb{E}\bigl[|\varepsilon_1|^3\bigr]<\infty$. 
For $r\ge1$ set
$
\mathcal{P}_r(\mathbb{R})
:=\Bigl\{\mu\in\mathcal{P}(\mathbb{R}) : \int_{\mathbb{R}} |x|^r\,\mu(dx)<\infty\Bigr\},
$
and define the cubic aggregator $H_{3}:\mathcal{P}_3(\mathbb{R})\to\mathbb{R}$ by
$
H_{3}(\mu):=\int_{\mathbb{R}} x^3\,\mu(dx).
$
Consider the nonlinear Markov chain
\[
X_{t+1}=aX_t-\beta\,H_{3}(\mu_t)+\varepsilon_{t+1},\qquad \mu_t:=\mathrm{Law}(X_t),
\]
and the associated one-step operator $T$ given by $T\mu \;:=\; \mathrm{Law}\bigl(aX-\beta H_{3}(\mu)+\varepsilon\bigr)$. 

Fix $p\ge1$ such that $\mathbb{E}|\varepsilon_1|^p<\infty$ and write $r:=\max\{3,p\}$.
Then $T(\mathcal{P}_r(\mathbb{R}))\subseteq\mathcal{P}_r(\mathbb{R})$, but 
$T:(\mathcal{P}_r(\mathbb{R}),W_p)\to(\mathcal{P}_r(\mathbb{R}),W_p)$ is \emph{not} globally Lipschitz. Indeed, for $x\neq y$ set $\mu=\delta_x$ and $\nu=\delta_y$.
Then $W_p(\mu,\nu)=|x-y|$ and $H_{3}(\delta_z)=z^3$. 

Moreover,
$
T\delta_z=\mathrm{Law}\bigl(az-\beta z^3+\varepsilon\bigr),
$ 
which is a translate of $\mathrm{Law}(\varepsilon)$ by the constant $az-\beta z^3$. Hence, 
\[
W_p(T\delta_x,T\delta_y)
=\big|a(x-y)-\beta(x^3-y^3)\big|
=\Big|\,a-\beta\,\frac{x^3-y^3}{x-y}\,\Big|\,|x-y|.
\]
Therefore,
\[
\frac{W_p(T\delta_x,T\delta_y)}{W_p(\delta_x,\delta_y)}
=\big|\,a-\beta(x^2+xy+y^2)\,\big|.
\]
Since $x^2+xy+y^2$ is unbounded on $\mathbb{R}^2$, no finite global $W_p$-Lipschitz constant can exist.

On the other hand, on the domain $W:=\mathcal{P}_3(\mathbb{R})$ and with $D$ the set of increasing functions, the monotonicity conditions for uniqueness follow from the same arguments as in Claim~\ref{claim1}. Hence Proposition \ref{Prop:local}  yields uniqueness, and
Proposition \ref{Prop:existence2} yields existence of an invariant distribution in $W$. That is, $T$ admits a unique invariant
distribution in $\mathcal{P}_3(\mathbb{R})$.
   
\end{remark}

\subsubsection{Contraction in Strategic Queueing }

Now consider the strategic queueing system we studied in Section \ref{sec:queueing}. Suppose for simplicity that  the system behaves as a strategic M/M/1 queue where  $S\sim\operatorname{Exp}(\mu)$ and $T(h)\sim\operatorname{Exp}(\lambda(h))$ with $\mu=1$.

We first define the CDF $F_{h;x}(t) := P\{Y_h(x) \le t\}$ of the random variable $Y_h(x) = \max\{0, x+S-T(h)\}$ which is given by $0$ for $t<0$ and
\begin{equation}
    \label{equation:F_laplace}
  F_{h;x}(t) = \begin{cases} 
    \displaystyle
      \frac{1}{1+\lambda(h)}\,\exp(\lambda(h)\,(t-x)),
      &0\le t<x,\\[8pt]
    \displaystyle
      1-\frac{\lambda(h)}{1+\lambda(h)}\,\exp(-(t-x)),
      &t\ge x.
  \end{cases}
\end{equation}

Indeed, this follows from the well known fact that the difference between the two exponential distributions $S$ and $T(h)$ with parameters $1$ and $\lambda(h)$, respectively is a Laplace distribution.

\textbf{Wasserstein Distance.} 
To test contraction we let 
\(\mu=\delta_{a}\) for $a > 1$ and perturb it to
\(\nu_{\varepsilon}=(1-\varepsilon)\,\delta_{a}+\varepsilon\,\delta_{c}\)
for some $c>a$ and small \(\varepsilon\). We will focus on $W_{1}$ for this example but similar computations to show non-contraction of $T$ can be done to other $p \geq 1$. 
The 1-Wasserstein distance between the measures is 
$
   W_{1}(\mu,\nu_{\varepsilon})
   \;=\;\varepsilon\lvert c-a\rvert.
$

We now compute the law of $T\mu$ and $T\nu_{\varepsilon}$. 
For \(\mu=\delta_{a}\) the mean is $a$ so the  
cumulative distribution function is given by $ F_{T\mu}(t) = F_{a;a}(t)$ (see Equation \ref{equation:F_laplace}). For  \(\nu_{\varepsilon}=(1-\varepsilon)\,\delta_{a}+\varepsilon\,\delta_{c}\) the mean is $\bar h=(1-\varepsilon)a+\varepsilon c$ so 
\[
   F_{T\nu_{\varepsilon}}(t)
     =(1-\varepsilon)\,F_{\bar h ; a}(t)
      +\varepsilon\,F_{\bar h ; c}(t),
   \quad
   \bar h=(1-\varepsilon)a+\varepsilon c. 
\]

The 1-Wasserstein distance between these probability measures is 
\[
   W_{1}(T\mu,T\nu_{\varepsilon})
   =\int_{0}^{\infty}
      \bigl|F_{T\mu}(t)-F_{T\nu_{\varepsilon}}(t)\bigr|\,dt,
\]
which can be evaluated analytically and numerically. In Figure \ref{fig:subW1} 
we evaluate  for small values of $\varepsilon$ 
and plot the ratio
\[
   K(\varepsilon)
   =\frac{W_{1}(T\mu,T\nu_{\varepsilon})}
          {W_{1}(\mu,\nu_{\varepsilon})}.
\]
and show that it is above $1$, i.e., $T$ is not a global contraction in the 
\(W_{1}\) metric.

\textbf{Total Variation.}  As in the Wasserstein metric example, to test contraction we let 
\(\mu=\delta_{a}\) for $a > 1$ and perturb it to
\(\nu_{\varepsilon}=(1-\varepsilon)\,\delta_{a}+\varepsilon\,\delta_{c}\)
for some $c>a$ and small \(\varepsilon\).  The TV distance is $\|\mu - \nu_\varepsilon\|_{TV} = \varepsilon$. We will consider the measurable set $A=\{0\}$. From Equation (\ref{equation:F_laplace}) and the analysis of the Wasserstein metric we have 
\begin{align*}
    T\mu(\{0\}) &= F_{a;a}(0) \\
    T\nu_\varepsilon(\{0\}) &= (1-\varepsilon)F_{\bar{h}(\varepsilon) ; a }(0)  + \varepsilon F_{\bar{h}(\varepsilon) ; c} (0). 
\end{align*}
where $\bar{h}(\varepsilon) := (1-\varepsilon)a + \varepsilon c$. 

We define the difference $D(\varepsilon) := T\mu(\{0\}) - T\nu_\varepsilon(\{0\})$ and note that 
$$ \frac{\|T\mu - T\nu_\varepsilon\|_{TV}}{\|\mu - \nu_\varepsilon\|_{TV}} \ge \frac{|D(\varepsilon)|}{\varepsilon} $$
from the definition of the TV metric. 

In Figure \ref{fig:subTV} we evaluate for small values of $\varepsilon$  and compute numerically $|D(\epsilon)|/\epsilon$ and show that it is above $1$, i.e., $T$ is not a global contraction in the TV metric.   

\begin{figure}[htbp]
  \centering
  \begin{subfigure}[b]{0.45\textwidth}
    \centering
\includegraphics[width=\textwidth]{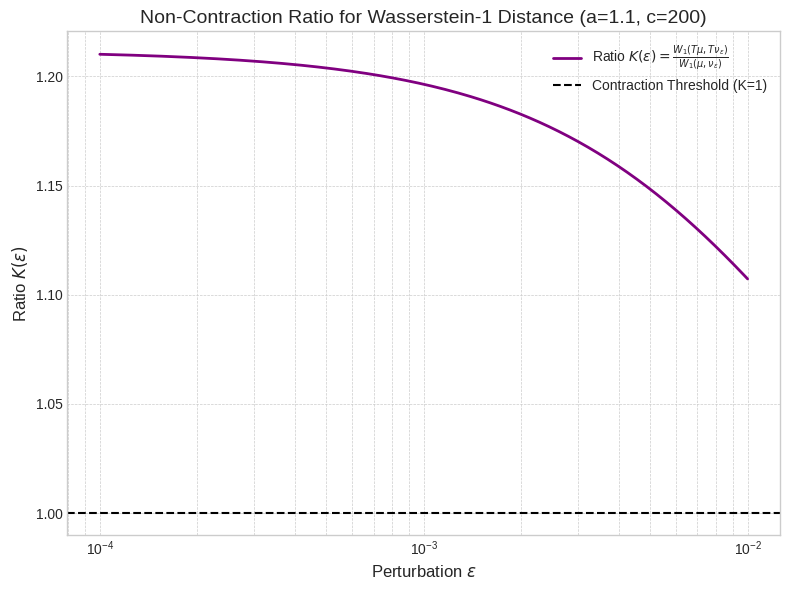}
    \caption{Non-contraction in the Wasserstein metric}
    \label{fig:subW1}
  \end{subfigure}
  \hfill
  \begin{subfigure}[b]{0.45\textwidth}
    \centering
\includegraphics[width=\textwidth]{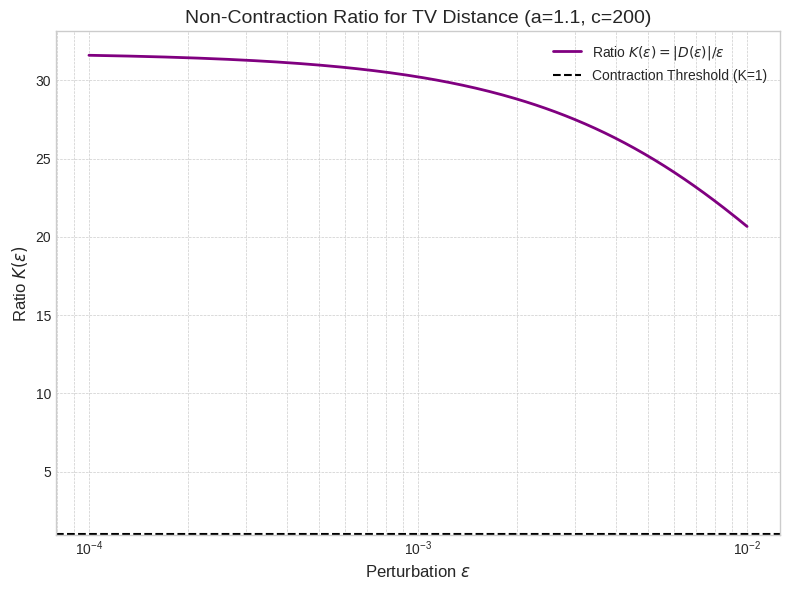}
    \caption{Non-contraction in the TV metric}
    \label{fig:subTV}
  \end{subfigure}
  \caption{The Figures show the non-contraction in the Wasserstein and TV metric for the M/M/1 strategic queueing systems with parameters $\lambda(h) = 1/(h+1)$, $a=1.1$, $c=200$. }
  \label{fig:overall}
\end{figure}

Despite this non-contraction, Algorithm \ref{alg:bisection_simplified} computes the equilibrium very fast and with only 15 iterations for the bisection method as we see in Figure \ref{Figure:BisectionQueue}.

\begin{figure}[htbp]
  \centering
  \begin{subfigure}[b]{0.45\textwidth}
    \centering
\includegraphics[width=\textwidth]{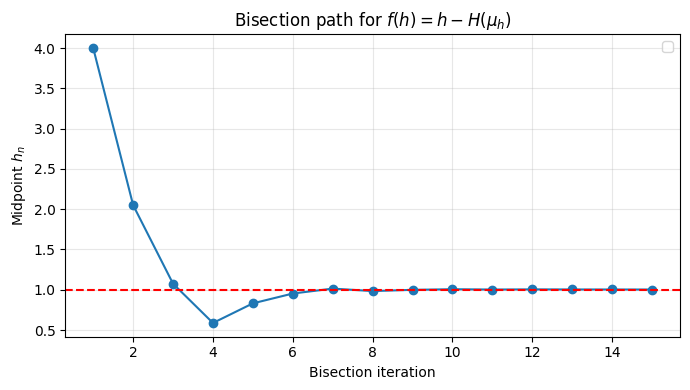}
    \caption{Algorithm \ref{alg:bisection_simplified} convergence}
   \label{Figure:BisectionQueue}
  \end{subfigure}
  \hfill
  \begin{subfigure}[b]{0.45\textwidth}
    \centering
\includegraphics[width=\textwidth]{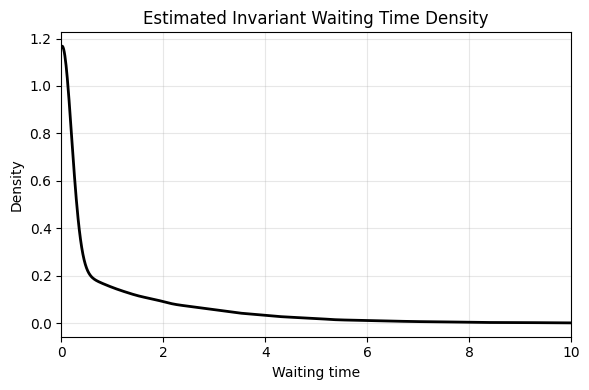}
    \caption{Equilibrium waiting time distribution}
    \label{Figire:DensityQueue}
  \end{subfigure}
  \caption{ Figure \ref{Figure:BisectionQueue} shows the sequence of midpoints
  $\{h_n\}$ generated by 
(Algorithm~\ref{alg:bisection_simplified}) when applied to the strategic M/M/1 queue 
           with
           $S \sim \operatorname{Exp}(1)$ and arrival rate
    $\lambda(h)=1/(1+h)$.
           At each node the stationary mean $H(\mu_h)=\mathbb E[X_h]$
           is estimated by a Monte Carlo simulation of the
           linear $M/M/1$ queue. Algorithm \ref{alg:bisection_simplified} finds 
          the unique fixed point
           $h^\ast \approx 1.00$ (which can be calculated explicitly in this simple setting as in Claim \ref{claim_queue}) in fewer than 15 iterations with tolerance level $10^{-4}$. Figure \ref{Figire:DensityQueue} shows  the estimated invariant density of waiting times, obtained by applying a Gaussian kernel density estimator. }
  \label{fig:QueueOverall}
\end{figure}

\subsubsection{Contraction in the Wealth–Distribution Model}
\label{sec:noncontraction:wealth}

In this section we consider the wealth distribution
application presented in  Section~\ref{sec:wealth}. In particular, we consider a single asset case with a fixed interest rate that is determined by the aggregate agents' behavior in the economy.\footnote{Specifically, we consider the Aiyagari model \citep{aiyagari1994} as presented in \cite{light2020uniqueness} (see the full details there). The production function is given by 
$f(k)=k^{\alpha}$ with $\alpha=0.5$ and the interest rate is therefore given by 
$R(H(\mu))=\alpha H(\mu)^{\alpha-1}-\delta+1$ with $\delta=0.1$ (as in \cite{light2020uniqueness}) where $H(\mu) = \int x \mu(dx)$.   The nonlinear Markov chain is then given by 
\(X_{t+1}=R(H(\mu_t))\,g\bigl(X_t,R(H(\mu_t))\bigr)+Y_{t+1}\),
where $\{Y_{t}\}$ are i.i.d. labor income shocks and \(g(\,\cdot\,;R)\) is the optimal saving policy that is determined by an income–fluctuation problem with logarithmic
utility. In this setting it can be shown that the savings policy function is increasing in the interest rate and current wealth and Property (C) holds \citep{light2020uniqueness} so we can use Corollary \ref{Coro:wealth} to show that the nonlinear Markov chain has indeed a unique invariant distribution. }
In this model the optimal savings policy does not have closed-form solution. We compute it by using the value function iteration algorithm. In Figure \ref{fig:WealthOverall} we first test numerically if $T$ is a global contraction in the \(W_1\) metric.\footnote{In particular, given a fixed rate $R$, we solve the Bellman equation for the income fluctuation problem 
\[
   V(x)=\max_{a\in[0,x]}
     \Bigl\{\log(x-a)+\beta \mathbb{E}[V(Ra+Y')]\Bigr\},
\]
on the cash–on–hand grid \(x\in[10^{-3},15]\) using value–function
iteration with $800$ iterations and a $25$‑point grid for the savings
between $0$ and $x$ and interpolate the optimal savings to derive the optimal policy function $g(x,R)$.  
Then, to implement $T$, for any empirical wealth distribution represented by a sample
\(\{x_i\}_{i=1}^N\) we set \(H(\mu)=\frac1N\sum_i x_i\),
compute \(R=R(H(\mu))\) and the corresponding policy $g(\cdot, R)$, draw i.i.d.\ income $Y_i\in\{1,3\}$ with equal
probabilities for each agent, and return
\(x_i' = R\,g_R(x_i)+Y_i\).
} We then show that Algorithm \ref{alg:bisection_simplified} converges fast to the unique invariant distribution of this model and plot the corresponding wealth distribution.

\begin{figure}[htbp]
  \centering
  \begin{subfigure}[b]{0.5\textwidth}
    \centering
\includegraphics[width=\textwidth]{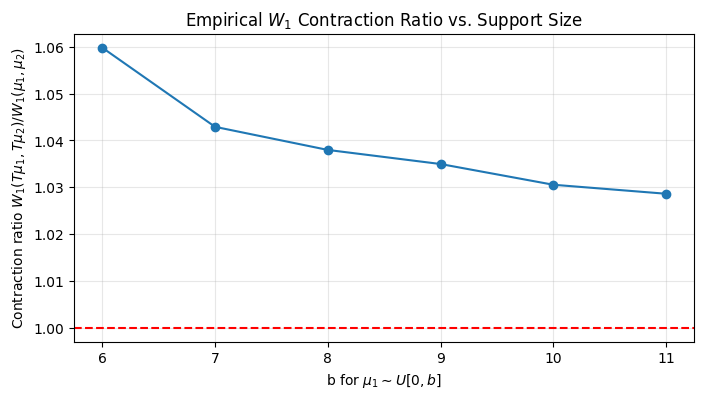}
    \caption{Non-contraction in the wealth distribution model}
   \label{Figure:ContractionWealth}
  \end{subfigure}
  \hfill
  \begin{subfigure}[b]{0.75\textwidth}
    \centering
\includegraphics[width=\textwidth]{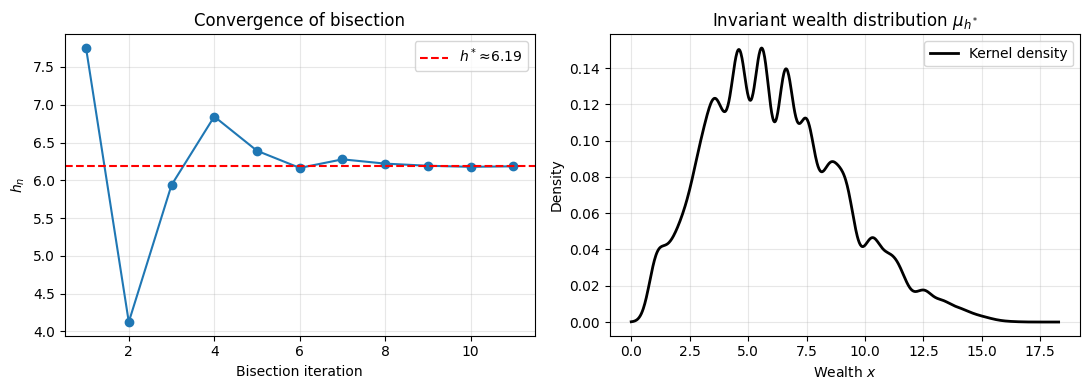}
    \caption{Computation of the unique invariant wealth distribution}
    \label{Figure:DensityWealth}
  \end{subfigure}
  \caption{In Figure \ref{Figure:ContractionWealth} 
we compare two initial laws:
 \(\mu_1\sim U[0,b]\) with $b\in\{6,7,\dots,11\}$,
  and \(\mu_2\sim U[0,5]\).
With $N=4\times10^{5}$ draws from these distributions we compute
\(W_1(\mu_1,\mu_2)\) and \(W_1(T\mu_1,T\mu_2)\), then plot the ratio
$
   \rho(b)=\frac{W_1(T\mu_1,T\mu_2)}{W_1(\mu_1,\mu_2)}$
against $b$.
We see that \(\rho(b)>1\); so \(T\) is not  a global
contraction. In Figure \ref{Figure:DensityWealth} we apply Algorithm \ref{alg:bisection_simplified}
 to compute the invariant distribution. 
The left panel displays the midpoint sequence $\{h_n\}$ generated by the
bisection and convergence to the fixed point
$h^*$ is achieved in only 11 iterations. 
The right panel plots the density of the invariant wealth distribution 
$\mu_{h^*}$ 
where we apply a
Gaussian kernel density estimator to plot it.}
  \label{fig:WealthOverall}
\end{figure}

While the monotonicity conditions of Theorem~\ref{Theorem: unique} guarantee uniqueness of the invariant distribution, the numerical simulations suggest that standard contraction arguments are inapplicable, as contraction is again not satisfied in this application.

\newpage

\bibliographystyle{ecta}
\bibliography{unique}

\end{document}